\theoremstyle:=definition,remark,plain\do{%
        \expandafter\g@addto@macro\csname th@\theoremstyle\endcsname{%
            \addtolength\thm@preskip\parskip
            }%
        }
\newtheorem{theorem}{Theorem}[section]
\newtheorem{lemma}[theorem]{Lemma}
\theoremstyle{remark}
\newcommand{\mb}{\mathbb}
\newcommand{\mr}{\mathrm}
\newcommand{\df}[1]{\textbf{\textit{\color{cyan!10!black} #1}}}
\newcommand{\eps}{\varepsilon}
\renewcommand{\phi}{\varphi}
\DeclareMathOperator{\dist}{dist}
\newcommand{\sphere}{\mathbf{S}}
\DeclareMathOperator{\proj}{proj}
\DeclareMathOperator{\graph}{graph}
\DeclareMathOperator{\kgn}{k_gn}
\DeclareMathOperator{\lsf}{lsf}
\newcommand{\jord}{\mathfrak{J}}
\newcommand{\jn}{\mathfrak{J}_\mathrm{N}}
\newcommand{\jci}{\mathfrak{J}_\mathrm{C^\infty}}
\newcommand{\bis}{\mathfrak{B}}
\DeclareMathOperator{\cf}{cf}
\begin{document}

\begin{center}
{\Large \noindent
Continuous dependence of curvature flow on initial conditions 
}

\bigskip

{\large \noindent
Michael~Gene~Dobbins
}


\begin{minipage}{0.85\textwidth}
\raggedright \footnotesize \singlespacing
\noindent
Department of Mathematical Sciences, Binghamton University (SUNY), Binghamton, \newline New York, USA. 
\texttt{mdobbins@binghamton.edu} 
\end{minipage}

\end{center}

\bigskip

\begin{abstract}
We study the evolution of a Jordan curve on the 2-sphere by curvature flow, also known as curve shortening flow, and by level-set flow, which is a weak formulation of curvature flow. 
We show that the evolution of the curve depends continuously on the initial curve in Fréchet distance in the case where the curve bisects the sphere.  This even holds in the limit as time goes to infinity.  
This builds on Joseph Lauer's work on existence and uniqueness of solutions to the curvature flow problem on the sphere when the initial curve is not smooth. 
\end{abstract}


\section{Introduction}
Consider the space $\bis$ of all simple closed curves, called bisectors, in the 2-sphere such that each region on either side has area $2\pi$, with Fréchet distance the metric on $\bis$.
Joseph Lauer recently showed existence and uniqueness of solution to the curvature flow problem for all positive time when the initial curve is a bisector \cite{lauer2016evolution}.  
Earlier, Michael Gage showed that a bisector evolving by curvature flow approaches a great circle in the limit as time goes to infinity \cite{gage1990curve}.   
This defines a function 
$ \cf : \bis \times [0,\infty]_\mb{R} \to \bis $  
by evolution by curvature flow.  
The main result of this paper is that this function is continuous.

\begin{theorem}\label{theoremFlowConvergesInFrechet}
Let $\gamma_k \in \bis$ and $t_k \in [0,\infty]_\mb{R}$ for $k \in \{1,\dots,\infty\}$.
If $\gamma_k \to \gamma_\infty$ in Fréchet distance and $t_k \to t_\infty$, then $\cf(\gamma_k,t_k) \to \cf(\gamma_\infty,t_\infty)$ in Fréchet distance. 
\end{theorem}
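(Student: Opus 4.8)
The plan is to prove joint continuity by splitting $[0,\infty]$ into a \emph{finite-time regime} and an \emph{infinite-time regime}, showing in each that $\gamma\mapsto\cf(\gamma,t)$ is continuous at $\gamma_\infty$ uniformly in $t$ over a neighbourhood of $t_\infty$, and then assembling the pieces with the triangle inequality. The external inputs are: Lauer's existence and uniqueness theorem, together with the fact (also from Lauer) that the level-set flow of a bisector does not fatten, so that $\cf$ is well defined and coincides with curve-shortening flow; the avoidance (comparison) principle for curve-shortening and level-set flow; the stability of the level-set flow under Hausdorff convergence of the initial data; and Gage's theorem. Write $d_{\mathrm F}$ and $d_{\mathrm H}$ for Fréchet and Hausdorff distance; note $d_{\mathrm H}\le d_{\mathrm F}$, and conversely that Hausdorff convergence of embedded curves with uniformly bounded curvature and a uniform lower bound on the normal injectivity radius upgrades to $C^1$, hence Fréchet, convergence. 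Fixing the sequences as in the statement, the triangle inequality
\[
 d_{\mathrm F}\bigl(\cf(\gamma_k,t_k),\cf(\gamma_\infty,t_\infty)\bigr)\ \le\ d_{\mathrm F}\bigl(\cf(\gamma_k,t_k),\cf(\gamma_\infty,t_k)\bigr)+d_{\mathrm F}\bigl(\cf(\gamma_\infty,t_k),\cf(\gamma_\infty,t_\infty)\bigr)
\]
reduces matters to the first term, since the second tends to $0$ by continuity of $t\mapsto\cf(\gamma_\infty,t)$ (Lauer at $t=0^+$, classical parabolic regularity on $(0,\infty)$, Gage at $t=\infty$).

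\emph{Finite-time regime.} For $t$ in a compact subinterval of $(0,\infty)$, instant smoothing and interior parabolic estimates do the job: $\gamma\mapsto\cf(\gamma,t)$ factors through $\cf(\gamma,t/2)$, and the first step is controlled by level-set stability ($\gamma_k\to\gamma_\infty$ in $d_{\mathrm H}$, hence $\cf(\gamma_k,t/2)\to\cf(\gamma_\infty,t/2)$ in $d_{\mathrm H}$, with no fattening by Lauer, then upgraded to $d_{\mathrm F}$ by the uniform smooth bounds available at positive time), after which the classical flow depends continuously on its data. The delicate sub-case is $t_\infty=0$. Here $\gamma_\infty$ is a genuine Jordan bisector, hence has a collar, and $\gamma_k\to\gamma_\infty$ in $d_{\mathrm F}$ means that for large $k$ the curve $\gamma_k$ lies, in a parametrised way, inside a tube of radius $\delta_k\to0$ about $\gamma_\infty$; sandwiching $\gamma_k$ between the inner and outer boundaries of that tube and applying the avoidance principle confines $\cf(\gamma_k,t)$ to a tube whose radius is controlled by $\delta_k$ together with the short-time displacement of $\cf(\gamma_\infty,\cdot)$, and since $\cf(\gamma_\infty,t)\to\gamma_\infty$ this forces $\cf(\gamma_k,t_k)\to\gamma_\infty$. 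It is exactly the non-degeneracy of the limit $\gamma_\infty$ (ruling out "vanishing fingers") that makes this step work, and producing the clean quantitative form of this sandwiching estimate is one of the two technical hearts of the argument.

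\emph{Infinite-time regime — the crux.} The great circles form a $2$-dimensional family $\mathcal G\subset\bis$ of stationary solutions. Representing a bisector near a great circle $C$ as a normal graph $u$ over $C$, curve-shortening flow linearises to $u_t=u_{\theta\theta}+u$ on the circle, whose spectrum is $\{\,1-n^2: n\ge0\,\}$; the only non-negative eigenvalue of interest is $0$, at $n=1$, spanning the tangent directions to $\mathcal G$, while the positive eigenvalue at $n=0$ corresponds to a first-order change of enclosed area and is therefore excluded on $\bis$. Thus $\mathcal G$ should be normally hyperbolic and exponentially attracting within $\bis$: there is a Fréchet-neighbourhood $\mathcal N$ of $\mathcal G$ such that every bisector in $\mathcal N$ flows, with displacement staying in a slightly larger neighbourhood, to a great circle $\cf(\beta,\infty)\in\mathcal G$ depending continuously on $\beta$, with locally uniform exponential rate — this quantitative refinement of Gage's theorem is what is needed, and is the second technical heart. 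Granting it, Gage's theorem gives $\cf(\gamma_\infty,t)\in\mathcal N$ for all $t\ge T$ once $T$ is large; the finite-time regime gives $\cf(\gamma_k,T)\to\cf(\gamma_\infty,T)$, so $\cf(\gamma_k,T)\in\mathcal N$ for large $k$; and then the stability of $\mathcal N$ keeps $\cf(\gamma_k,t)$ and $\cf(\gamma_\infty,t)$ uniformly close for all $t\ge T$, including $t=\infty$. Feeding this, together with the finite-time regime on $[0,T]$, into the reduction above and letting the neighbourhoods shrink proves the theorem.

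I expect the normally-hyperbolic stability analysis near $\mathcal G$ to be the principal obstacle: promoting Gage's asymptotic convergence to a single great circle into convergence that is simultaneously locally uniform in the initial bisector and continuous in the limiting great circle is exactly what the statement at $t=\infty$ demands, and the neutral "tilt" directions along $\mathcal G$ together with the area constraint that kills the unstable "dilation" direction are where one must work in Fréchet distance rather than a softer topology; the short-time sandwiching estimate at $t=0$ is a secondary but genuine difficulty of the same flavour.
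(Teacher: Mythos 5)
Your proposal takes a genuinely different route from the paper, and the two main steps you flag as ``technical hearts'' are in fact unproven and non-trivial, so as it stands there is a real gap rather than merely a difference of taste.

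The paper's strategy is topological and proceeds by contradiction. It first establishes Hausdorff convergence $\cf(\gamma_k,t_k)\to\cf(\gamma_\infty,t_\infty)$ directly from the level-set construction (nested annuli and the avoidance principle), together with a surjectivity argument using winding numbers. It then observes that Hausdorff-but-not-Fréchet convergence forces the evolving curve $\cf(\gamma_k,t_k)$ to pass close to some point $x_\infty\in\cf(\gamma_\infty,t_\infty)$ at least three times (a ``finger''). Using a conformal bisecting chord construction (Lemma~\ref{lemma-crossingcurve}) plus a delicate trajectory-tracking argument for intersection points (Lemmas~\ref{lemma-crossingpaths}--\ref{lemma-crossingcontinuous}), it manufactures a bisector $\eta$ that meets $\gamma_k$ at exactly $2$ points but whose flow at time $t_k$ would have to meet $\cf(\gamma_k,t_k)$ at $\ge4$ points, contradicting Angenent's non-increase of intersection number. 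For $t_\infty=\infty$ it replaces the annulus apparatus with Lauer's $r$-multiplicity, which is likewise non-increasing under the flow and upper semicontinuous in Fréchet distance; the general bisector case then reduces to the great-circle case via Gage's theorem and the already-proved finite-time statement. No linearization, spectral analysis, or normal hyperbolicity appears anywhere.

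Your proposed reduction by the triangle inequality is fine in spirit, but it does not actually reduce the difficulty: for $t_\infty=\infty$, controlling $d_{\mathrm F}(\cf(\gamma_k,t_k),\cf(\gamma_\infty,t_k))$ with $t_k\to\infty$ requires exactly the uniform-in-time stability you defer to the ``normally hyperbolic'' analysis. That analysis is not a small lemma: it asks for a nonlinear invariant-manifold theorem for curve shortening on $\sphere^2$, uniform over a Fréchet neighbourhood of the whole $2$-parameter family of great circles, together with a constrained linearization that rigorously kills the unstable $n=0$ mode using the area constraint defining $\bis$. Gage's theorem gives convergence for each fixed smooth bisector, not locally uniform convergence with a limit depending continuously on the initial curve (which is precisely the content of the theorem at $t=\infty$), so you would be assuming a strengthening of one of your inputs. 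The paper sidesteps all of this with the monotonicity of $r$-multiplicity, which is an elementary consequence of the avoidance principle.

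The finite-time step also has a gap as written: upgrading Hausdorff convergence to Fréchet convergence requires, besides bounded curvature, some argument that $\cf(\gamma_k,t)$ is a graph over $\cf(\gamma_\infty,t)$ for large $k$, i.e.\ that there are no fingers. That is the crux, not a routine consequence of ``uniform smooth bounds''; without an intersection-counting argument (or an equivalent), a sequence of curves with uniformly bounded curvature can converge in Hausdorff distance to a curve while forming long thin doubled-back strands. The paper's entire finite-time proof is devoted to excluding precisely that behaviour via Angenent's theorem and the construction of the auxiliary curve $\eta$; your sketch does not yet contain an idea that rules it out. The $t_\infty=0$ sandwiching you describe is closer in spirit to the paper's Hausdorff lemma (Lemma~\ref{lemma-flowconvergesinhausdorff}), but again the sandwich only gives Hausdorff control, not Fréchet control, without a finger-exclusion step.

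In short: your outline would, if carried out, likely give a quantitatively sharper theorem (exponential rates), but the two hearts you defer are each at least as hard as the theorem itself, and the finite-time upgrade from Hausdorff to Fréchet is missing the key mechanism. The paper's proof is both more elementary and complete, resting on Angenent's intersection theorem and Lauer's $r$-multiplicity rather than on parabolic and dynamical-systems machinery.
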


\subsection{Motivation from oriented matroids and vector bundles}

Whether the solution to a differential equation depends continuously on initial data is a question of fundamental interest.  In this case, however, the author was motivated by a conjecture from combinatorics, which in turn has applications for working with vector bundles.  
Specifically, Nicolai Mnëv and Günter Ziegler conjectured that each OM-Grassmannian of a realizable oriented matroid is homotopy equivalent to the corresponding real Grassmannian \cite[Conjecture 2.2]{mnev1993combinatorial}.  We will not go into precise technical details of the conjecture, 
but we will see an informal picture. 
The material in this section is not needed to understand the rest of the paper. 

An oriented matroid is a combinatorial analog to a real vector space, but where we only keep track of sign information.  One way to obtain an oriented matroid is as the set of all sequences of signs of all the vectors in a vector subspace of $\mb{R}^n$.  For example, the oriented matroid obtained from the space $\{(x,y) \in \mb{R}^2 : x+y = 0\}$ is given by the set  
$\{(+,-),(0,0),(-,+)\}$. 
However, oriented matroids are defined by purely combinatorial axioms, and not all oriented matroids are obtained in this way.  One way the Grassmannian is defined is as the set of all $k$-dimensional vector subspaces of a given vector space, and this set is given a metric.  Oriented matroids come equipped with analogs of dimension and subspace, and an OM-Grassmannian is a finite simplicial complex that is defined analogously from an oriented matroid. 

Gaku Liu showed that this conjecture does not hold in general; i.e.\ there is an OM-Grassmannian of a realizable oriented matroid that is not homotopy equivalent to the corresponding real Grassmannian \cite{liu2020counterexample}.  
Although the full conjecture is false, special cases remain open, such as for MacPhersonians, which are the OM-Grassmannians of the oriented matroids corresponding to $\mb{R}^n$, and in particular for the rank 3 case, which is the analog of the Grassmannian consisting of 3-dimensional subspaces of $\mb{R}^n$.
Mnëv and Ziegler had listed the rank 3 case as a theorem with the expectation that a proof would later appear in Eric Babson's Ph.D.\ thesis, but then it did not \cite{mnev1993combinatorial,babson1993combinatorial}.  Also, an erroneous proof that each MacPhersonian is homotopy equivalent to the corresponding Grassmannian in all ranks was published and then retracted \cite{biss2003homotopy,biss2009erratum}.

One source of interest in Grassmannians is as classifying spaces for vector bundles.  If the conjectured homotopy equivalence with the MacPhersonian were true, then it would mean that matroid bundles, which are purely combinatorial objects, would be an effective way to represent vector bundles, and matroid bundles could then serve as the basis for developing data structures and algorithms for working with vector bundles. 

Jim Lawrence showed that oriented matroids can be characterized as the combinatorial cell decompositions of the sphere arising from essential pseudosphere arrangements \cite{folkman1978oriented}.  Lawrence's theorem provides a topological model for the combinatorial axioms of oriented matroids.  In the rank 3 case, these are pseudocircle arrangements, which are collections of oriented simple closed curves in the 2-sphere such that every pair of curves either coincide or intersect at exactly 2 points, in which case any third curve either separates the 2 points or passes though both points.  An arrangement is said to be essential when no single point is contained in all pseudospheres. 

Dobbins introduced spaces of weighted essential pseudosphere arrangements called pseudolinear Grassmannians, and showed that each rank 3 pseudolinear Grassmannian is homotopy equivalent to the corresponding real Grassmannian.  
The pseudolinear Grassmannians serve as an intermediate space between the Grassmannians and MacPhersonians, and Dobbins's theorem represents a step toward showing that the conjecture holds for the rank 3 MacPhersonians by showing homotopy equivalence between this intermediate space and one side \cite{dobbins2017grassmannians}.  

Thw present paper takes another step toward showing the rank 3 MacPhersonian case of the conjecture, by providing a tool for replacing pseudolinear Grassmannians with simpler spaces, namely spaces of \emph{antipodally   symmetric} weighted pseudocircle arrangements.  These spaces have the advantage that we can disregard the last property of pseudocircle arrangements, i.e.\ a third curve must separates or pass though the points where two other curves intersect, since this property is already guaranteed to hold in the symmetric case.

Most of the proof of Dobbins's theorem is occupied with showing that a pseudocircle arrangement can be deformed into a great circle arrangement in a way that depends continuously on the initial arrangement, and that satisfies the conditions of being a pseudocircle arrangement throughout the deformation.
Armed with Theorem \ref{theoremFlowConvergesInFrechet}, we might be tempted to define such a deformation by simply allowing all pseudocircles to evolve by curvature flow to great circles.
A problem with this is that arrangements in the pseudolinear Grassmannian must be essential, i.e.\ no point can be in the intersection of all pseudocircles.  However, it may happen that an arrangement of bisectors that was initially essential could all pass though a single point at some later time during the course of their evolution.  Instead, Theorem \ref{theoremFlowConvergesInFrechet} just gives us a way to deform a single pseudocircle to a great circle in an antipodally symmetric way, and doing so for an arrangement will take a similar argument to the proof of Dobbins's theorem.  Such an argument would be beyond the scope of the present paper.

\subsection{Definitions and notation}


We denote real intervals by $(a,b]_\mb{R} = \{x \in \mb{R} : a < x \leq b\}$ for bounds $a,b \in \mb{R}$ with any combination of round or square brackets for (half) open or closed intervals. 
We give infinite closed intervals the same topology as finite closed intervals.
For instance, $[0,\infty]_\mb{R}$ is homeomorphic to $[0,1]_\mb{R}$ and with this topology the sequence of natural numbers converge to $\infty$. 
Be warned that that the topology on $[0,\infty]_\mb{R}$ is not the same as the usual metric topology. 
We also similarly denote segments in the complex plane by 
$[a,b]_\mb{C} = \{ ta +(1-t)b : t \in [0,1]_\mb{R} \}$ for $a,b \in \mb{C}$. 
We denote the unit circle in the complex plane by $\sphere^1$ and the unit sphere in $\mb{R}^3$ by $\sphere^2$.
For $X \subseteq \sphere^2$ and $\delta > 0$, let $X \oplus \delta$ be the set of points at most distance $\delta$ from $X$.  

Let $\jord$ denote the set of simple closed curves, called Jordan curves, on the 2-sphere, which we treat as subsets of the 2-sphere.
\df{Fréchet distance} on $\jord$ is defined by 
\[ \dist_\mr{F} (\gamma_1,\gamma_0) = \inf_{\phi_1,\phi_0} \sup_x \|\phi_1(x) -\phi_0(x) \|  \]
where the $\phi_i : \sphere^1 \to \gamma_i$ are a homeomorphisms. 
We will also use Hausdorff distance, which defines a strictly coarser metric on $\jord$ than Fréchet distance does.
Hausdorff distance is defined by 
\[
\dist_H(\gamma_1,\gamma_0) = \inf \left\{ \delta : \gamma_1 \subseteq \gamma_0 \oplus \delta, \gamma_0 \subseteq \gamma_1 \oplus \delta \right\}. 
\]

Let $\jci$ be the subset of smooth curves and $\jn$ be the subset of curves of area 0. 
Note that $\jn$ is known to be a proper subset of $\jord$, as there are Jordan curves that have positive area \cite{osgood1903jordan}. 
Given a smooth curve $\gamma \in \jci$ and a point $p \in \gamma$, let $\kgn(\gamma,p) \in \mb{R}^3$ denote the vector of geodesic curvature of $\gamma$ at $p$.
A solution to the \df{curvature flow problem} for a 
given initial curve $\gamma_0 \in \jord$ and stopping time $T \in (0,\infty]_\mb{R}$, 
is a map $\Gamma : \sphere^1 \times (0,T) \to \sphere^2$ such that $\partial_t \Gamma(x,t) = \kgn(\Gamma(\sphere^1,t),\Gamma(x,t))$ 
and $\Gamma(\sphere^1,t) \to \gamma_0$ in Fréchet distance as $t \to 0$. 
Let 
\[\cf(\gamma_0,t) = \Gamma(\sphere^1,t)\] 
where $\Gamma$ is the solution to the curvature flow problem starting from $\gamma_0$, provided that a unique solution exists. 
Lauer showed $\cf(\gamma_0,t)$ is well defined for $t$ sufficiently small if $\gamma_0 \in \jn$, and for all $t > 0$ if $\gamma_0 \in \bis$ \cite{lauer2016evolution}.
A theorem of Michael Gage implies that for $\gamma_0 \in \bis$, we have $\cf(\gamma_0,t)$ approaches a great circle as $t \to \infty$, which we simply denote by $\cf(\gamma_0, \infty)$ \cite{gage1990curve}.

For $T \in [0,\infty]_\mb{R}$, let 
$\jord_T$ be the set of curves for which the curvature flow problem has a solution up to time $T$. 
Note that $\jord_\infty = \bis$.

\subsection{Level-set flow}

Lauer's theorem uses a weak notion of curvature flow called level-set flow, which is a set that evolves from a given initial set $X \subseteq \sphere^2$ by 
\[ \lsf(X,t) = 
\sphere^2 \setminus \bigcup \left\{ \cf(\gamma,t-s) :  s<t, \gamma \in \jci, \gamma \cap \lsf(X,s) = \emptyset \right\}. 
\]
In our context, an equivalent simpler definition of level-set flow suffices.
Given a curve $\gamma \in \jord$, let $A_k(0) \subset \sphere^2$ be an intial sequence of annuli with smooth boundary, 
and let $A_k(t)$ be parameterized family of sets with boundary evolving by curvature flow, i.e.\ $\partial A_k(t) = \cf(\alpha_k,t) \cup \cf(\alpha_{-k},t)$ where $\partial A_k(0) = \alpha_k\cup \alpha_{-k}$ with $\alpha_{\pm k} \in \jci$.
Then, 
\[ \lsf(\gamma,t) = \bigcap_{k=1}^\infty A_k(t).\] 
In this case we say the annuli $A_k(t)$ evolve by curvature flow \cite{chen1991uniqueness,evans1991motion,ilmanen1994elliptic,lauer2016evolution}. 

Lauer showed that if the initial curve has area 0, then the level-set flow immediately becomes a smooth Jordan curve evolving by curvature flow up to some time $T> 0$, after which the level-set flow vanishes.
Moreover, the level-set flow is the unique solution to the curvature flow problem in this case. 
That is, if $\gamma \in \jn$ then $\lsf(\gamma,t) \in \jci$ for $t>0$, and also $\cf(\gamma,t) = \lsf(\gamma,t)$ \cite{lauer2016evolution}.
Note that this process is not time dependent, i.e.\ 
$\cf(\cf(\gamma,s),t) = \cf(\gamma,s+t)$. 


A nice feature of curvature flow, which provides the underlying intuition behind the definition for level-set flow, is the avoidance principle, which says that if two curves are initially disjoint then they remain disjoint throughout their evolution, for as long as a solution exists. 
Sigurd Angenet showed something more specific, that the number of intersection points immediately becomes finite and then is non-increasing throughout their evolution, provided only that the two curves are initially distinct \cite[Theorem~1.3]{angenent1991parabolic}.

\section{Proof of continuity}


The goal of this section is to prove Theorem \ref{theoremFlowConvergesInFrechet}.
Let us first briefly outline the main idea of the proof. 
We will argue by contradiction, starting from the assumption that the hypotheses of Theorem \ref{theoremFlowConvergesInFrechet} hold, but that $\cf(\gamma_k,t_k)$ does not converge to $\cf(\gamma_\infty,t_\infty)$ in Fréchet distance.
In Lemma \ref{lemma-flowconvergesinhausdorff}, we show that $\cf(\gamma_k,t_k) \to \cf(\gamma_\infty,t_\infty)$ in Hausdorff distance, which follows almost directly from the definition of level-set flow; see Figure \ref{figure-Hausdorff-converge}.  
In order for $\cf(\gamma_k,t_k)$ to converge in Hausdorff distance but not Fréchet distance, 
the curve $\cf(\gamma_k,t_k)$ must pass close by some point $x_\infty \in \cf(\gamma_\infty,t_\infty)$ at least three times as it moves back and forth alongside $\cf(\gamma_\infty,t_\infty)$; see Figure \ref{figure-Frechet-converge}. 
Using Lemma \ref{lemma-crossingcurve}, 
we will find a curve $\eta$ that 
intersects $\gamma_k$ for some $k$ sufficiently large at only 2 points, and evenly bisects the sphere, and 
passes close by $x_\infty$ at time $t_k$.
Then, $\cf(\gamma_k,t_k)$ will have to cross $\cf(\eta,t_k)$ at least 3 times as it passes by $x_\infty$ while moving back and forth as alongside $\cf(\gamma_\infty,t_\infty)$, which contradicts that the number intersection points is non-increasing, since $\gamma_k$ only intersects $\eta$ twice; see Figure \ref{figure-Frechet-converge}. 
Some parts of the argument will have to be modified in the infinite time case where $t_\infty = \infty$, so we will deal with that case separately at the end.

Let us begin with a basic lemma. 

\vbox{
\begin{lemma}\label{lemma-nearparam}
For each simple closed curve $\gamma \in \jord$ and $\eps>0$, there is $\delta = \delta(\gamma,\eps) >0$ such that for every simple closed curve $\eta \subset (\gamma \oplus \delta)$, there is continuous map a $\phi : \eta \to \gamma$ such that for all $x \in \eta$, $\|\phi(x)-x\| < \eps$.
\end{lemma}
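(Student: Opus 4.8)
The plan is to construct, once and for all, a continuous retraction $r$ from a fixed neighborhood of $\gamma$ onto $\gamma$ that displaces every point by less than $\eps$, and then to choose $\delta$ small enough that $\gamma \oplus \delta$ lies inside that neighborhood; restricting $r$ to $\eta$ then yields the required map $\phi$, and the fact that $\eta$ is a simple closed curve plays no role.

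The point requiring care is that $\gamma$ is only a topological Jordan curve — it may be nowhere differentiable, or even have positive area — so there is no tubular neighborhood and no nearest-point projection to exploit. I would get around this using the Jordan--Schoenflies theorem, which guarantees that $\gamma$ is flat, hence bicollared, in $\sphere^2$: there is a homeomorphism $H \colon \sphere^1 \times [-1,1] \to N$ onto a closed neighborhood $N$ of $\gamma$ with $H(\sphere^1 \times \{0\}) = \gamma$. (Equivalently, conjugate the latitude-to-equator projection by a Schoenflies homeomorphism carrying $\gamma$ onto the equator.) Then define $r \colon N \to \gamma$ by $r\bigl(H(w,s)\bigr) = H(w,0)$, which is a continuous retraction of $N$ onto $\gamma$.

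It then remains to apply compactness twice. First, $(w,s) \mapsto \|H(w,s) - H(w,0)\|$ is continuous on the compact space $\sphere^1 \times [-1,1]$ and vanishes on $\sphere^1 \times \{0\}$, so there is $\sigma \in (0,1)$ with $\|H(w,s) - H(w,0)\| < \eps$ whenever $|s| < \sigma$; hence $r$ moves every point of the open neighborhood $N_\sigma \dfeq H(\sphere^1 \times (-\sigma,\sigma))$ of $\gamma$ by less than $\eps$. Second, $\sphere^2 \setminus N_\sigma$ is compact and disjoint from the compact set $\gamma$, so $\delta \dfeq \tfrac{1}{2}\dist(\gamma, \sphere^2 \setminus N_\sigma) > 0$ and $\gamma \oplus \delta \subseteq N_\sigma$. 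For any simple closed curve $\eta \subset \gamma \oplus \delta$, the restriction $\phi \dfeq r|_\eta \colon \eta \to \gamma$ is continuous and satisfies $\|\phi(x) - x\| < \eps$ for all $x \in \eta$, as desired.

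The only nontrivial ingredient is the existence of the bicollar $H$, i.e.\ the Jordan--Schoenflies theorem; once that is granted, everything else is routine point-set topology, so I do not anticipate a serious obstacle. (The alternative of trying to build $\phi$ by hand from a finite arc decomposition of $\gamma$ and matching arcs of $\eta$ to them would force one to track a degree/monotonicity condition to keep the map continuous, which is exactly the bookkeeping the Schoenflies bicollar lets us avoid.)
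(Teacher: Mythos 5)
Your proof is correct but takes a genuinely different route from the paper's. The paper gives a hands-on construction: it marks off finitely many points $p_1,\dots,p_n$ along $\gamma$ spaced $\eps/6$ apart, sets $\delta$ to be $\tfrac16$ of the minimum distance between non-consecutive arcs of the resulting subdivision, analogously marks off points $q_1,\dots,q_m$ along $\eta$, sends each $q_j$ to a nearby point of $\gamma$, and checks that consecutive images land in the same or adjacent arcs so that $\phi$ can be extended continuously arc by arc — precisely the degree/monotonicity bookkeeping you mention wanting to avoid. Your argument instead invokes the Jordan--Schoenflies theorem to produce a bicollar $H\colon \sphere^1\times[-1,1]\to N$, defines the retraction $r(H(w,s))=H(w,0)$ once and for all, and then uses two routine compactness arguments to shrink the collar and pick $\delta$. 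What this buys you: the construction is cleaner, the map $\phi$ is a restriction of a single globally-defined retraction, and — as you correctly point out — the hypothesis that $\eta$ is a simple closed curve is never used, so your version of the lemma holds for arbitrary subsets $\eta\subset\gamma\oplus\delta$. What it costs: you lean on the full strength of the Schoenflies theorem (flatness of topological Jordan curves in $\sphere^2$), which is a deeper input than the elementary metric/compactness reasoning the paper uses; the paper's proof, while fussier, is essentially self-contained. Both approaches correctly handle the central difficulty that $\gamma$ may be nowhere smooth and of positive area, so that no metric nearest-point projection is available. One small point worth making explicit in your writeup is why $N_\sigma=H(\sphere^1\times(-\sigma,\sigma))$ is open in $\sphere^2$ and not merely open in $N$: this follows because the Schoenflies homeomorphism carries $N_\sigma$ to an open band about the equator, or equivalently because $H(\sphere^1\times(-1,1))$ is the interior of $N$ in $\sphere^2$.
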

}

Note that $\phi$ is not necessarily bijective, so this does not provide an upper bound on Fréchet distance. 

\begin{proof}[Proof of Lemma \ref{lemma-nearparam}]
Let $p_1$ be any point on $\gamma$, then direct $\gamma$ and let $p_1,p_2,p_3,\dots$ be the sequence of points such that $p_{i+1}$ is the first point of $\gamma$ after $p_i$ that is distance $\eps/6$ from $p_i$, unless the rest of $\gamma$ from $p_i$ to $p_1$ is within distance $\eps/6$, in which case the sequence ends.

Observe that this sequence must be finite.  To see why, suppose that the sequence of points $p_i$ were not finite and map $\gamma$ homeomorphically to a circle.  Then the images of the points $p_i$ would then have an accumulation point, so by continuity the sequence of points $p_i$ would also have an accumulation point, which is a contradiction as each consecutive pair is the same distance apart.  Let $p_n$ be the last point of the sequence, and let $p_{n+i}=p_i$.

Let $\delta$ be $\nicefrac16$ times the minimum distance between non-consecutive pairs of arcs of $\gamma$ subdivided by the points $\{p_i\}$, and let $\eta$ be a simple closed curve in $\gamma\oplus \delta$.
Note that $0<\delta\leq\eps/6$. 
Let $q_1,\dots,q_m$ be an analogously defined sequence on $\eta$ of points spaced $\delta$ apart.  
For each $q_j$, choose some point $\phi(q_j) \in \gamma$ at most $\delta$ away.
Then, for each consecutive pair of point, their images $(\phi(q_j),\phi(q_{j+1}))$ are distance at most $3\delta$ apart, so the points $\phi(q_j),\phi(q_{j+1})$ must either be in the same arc or in consecutive arcs of $\gamma$ subdivided by $\{p_i\}$.  Hence, there is an arc on $\gamma$ from $\phi(q_j)$ to $\phi(q_{j+1})$ that crosses at most one point $p_i$, so this arc has diameter at most $3\eps/6 = \eps/2$.
Therefore, every point on the arc of $\eta$ from $q_j$ to $q_{j+1}$ is at most distance $\eps/2+2\delta<\eps$ from every point on the arc of $\gamma$ from $\phi(q_j)$ to $\phi(q_{j+1})$, so we can extend $\phi$ to a map with the desired properties.
\end{proof}

\begin{lemma}\label{lemma-flowconvergesinhausdorff}
Let $T \in (0,\infty]_\mb{R}$ and $\gamma_k \in \jord_T$ and $t_k \in [0,T)_\mb{R}$ for $k \in \{1,\dots,\infty\}$.
If
$\gamma_k \to \gamma_\infty$ in Fréchet distance and $t_k \to t_\infty$, then $\cf(\gamma_k,t_k) \to \cf(\gamma_\infty,t_\infty)$ in Hausdorff distance.
\end{lemma}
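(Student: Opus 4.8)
The plan is to exploit the squeeze structure built into the definition of level-set flow via nested annuli, together with the avoidance principle. First I would fix a target radius $\eps > 0$ and show that $\cf(\gamma_k, t_k)$ is eventually contained in $\cf(\gamma_\infty, t_\infty) \oplus \eps$, and conversely that $\cf(\gamma_\infty, t_\infty)$ is eventually contained in $\cf(\gamma_k, t_k) \oplus \eps$. For the first containment, since $\gamma_\infty \in \jord_T$, pick an annulus $A$ with smooth boundary $\partial A = \alpha \cup \alpha'$ with $\alpha, \alpha' \in \jci$ so that $\gamma_\infty$ lies in the interior of $A$ and $A$ is thin enough that $\cf(A, t_\infty)$ — the annulus whose boundary evolves by curvature flow — lies within the $\eps/2$-neighborhood of $\cf(\gamma_\infty, t_\infty)$; this is possible by the definition $\lsf(\gamma_\infty, t) = \bigcap_k A_k(t)$ together with compactness (the nested compact sets $A_k(t_\infty)$ shrink to $\cf(\gamma_\infty, t_\infty) = \lsf(\gamma_\infty, t_\infty)$, so some finite stage is inside the $\eps/2$-neighborhood). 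Because $\gamma_\infty$ is in the open interior of $A$, Fréchet (hence Hausdorff) convergence $\gamma_k \to \gamma_\infty$ gives $\gamma_k \subset \mathrm{int}\, A$ for all large $k$, so the curves $\gamma_k$ are disjoint from both boundary components $\alpha, \alpha'$. By the avoidance principle, $\cf(\gamma_k, t)$ stays disjoint from $\cf(\alpha, t)$ and $\cf(\alpha', t)$ for all $t$ up to the stopping time, hence $\cf(\gamma_k, t) \subset \cf(A, t)$ for all such $t$.

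Next I would handle the fact that $t_k$ only converges to $t_\infty$, rather than equalling it. The smooth flow of the annulus boundary is continuous in time, so there is an interval $(t_\infty - \rho, t_\infty + \rho)$ on which $\cf(A, t)$ remains inside the $\eps$-neighborhood of $\cf(\gamma_\infty, t_\infty)$; combining this with the previous paragraph, for $k$ large enough that both $\gamma_k \subset \mathrm{int}\, A$ and $|t_k - t_\infty| < \rho$ hold, we get $\cf(\gamma_k, t_k) \subset \cf(A, t_k) \subseteq \cf(\gamma_\infty, t_\infty) \oplus \eps$. This establishes one half of Hausdorff closeness.

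For the reverse containment I would run essentially the same argument with the roles reversed, but this requires a little more care because we cannot trap $\gamma_\infty$ on the inside of annuli around the moving targets $\gamma_k$. Instead, fix a point $x_\infty \in \cf(\gamma_\infty, t_\infty)$; I want to show that for all large $k$ the curve $\cf(\gamma_k, t_k)$ comes within $\eps$ of $x_\infty$. Pull this back in time: by the semigroup property and Lauer's smoothness, $x_\infty$ is approached by $\cf(\gamma_\infty, t)$ for $t$ near $t_\infty$ (continuity of the smooth flow when $t_\infty < T$; for $t_\infty = 0$ use the initial condition directly). Pick a small smooth circle $\beta$ of radius $\eps/2$ about a nearby point of $\gamma_\infty$ that $\gamma_\infty$ must cross — more robustly, choose two small circles $\beta^+, \beta^-$ bounding a tiny annulus $B$ straddling $\gamma_\infty$ near $x_\infty$, so that $\gamma_\infty$ enters $B$ and so must $\gamma_k$ for large $k$ by Hausdorff convergence of the initial curves; then $\cf(\gamma_k, t)$ must still meet the region swept between $\cf(\beta^+, t)$ and $\cf(\beta^-, t)$, since it separates them and avoidance prevents it from crossing either, forcing $\cf(\gamma_k, t_k)$ to lie within $\eps$ of $\cf(\beta^\pm, t_k)$, which is within $\eps$ of $x_\infty$ once $B$ and $|t_k - t_\infty|$ are small. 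Since $x_\infty$ was arbitrary and $\cf(\gamma_\infty, t_\infty)$ is compact, a finite subcover gives $\cf(\gamma_\infty, t_\infty) \subseteq \cf(\gamma_k, t_k) \oplus \eps$ for all large $k$.

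The main obstacle I expect is the reverse containment, specifically making the "trapping" argument rigorous: an outer barrier annulus suffices to bound $\cf(\gamma_k,t_k)$ from outside, but showing the flowed curve does not shrink away from a given point $x_\infty$ requires an inner obstruction, and one must verify that the little straddling annulus $B$ persists — i.e. that $\cf(\beta^+,t)$ and $\cf(\beta^-,t)$ do not themselves collide or vanish before time $t_\infty$, and that $\gamma_k$ genuinely separates them so the topological linking survives the flow. One clean way around this is to note that small circles shrink under curvature flow in a controlled, explicitly estimable way on $\sphere^2$, so by taking $B$ thin but its component circles not too small one keeps them alive and a definite distance apart on $[0, t_\infty + \rho]$; alternatively, one invokes Angenent's monotonicity of intersection number to conclude $\cf(\gamma_k, t)$ still crosses $\cf(\beta^\pm, t)$ twice, which pins its location. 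Handling $t_\infty = \infty$ is deferred, consistent with the outline, since there the barrier annuli themselves converge to great circles and a separate limiting argument is needed.
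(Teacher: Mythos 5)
Your outer containment, $\cf(\gamma_k,t_k)\subseteq\cf(\gamma_\infty,t_\infty)\oplus\eps$, is essentially the paper's argument: trap $\gamma_k$ inside a thin annulus about $\gamma_\infty$, invoke the avoidance principle, and handle the time discrepancy via continuity of the smooth flow. The paper is slightly more careful here (it proves a claim about $\graph(A_i)\subset\graph(\cf(\gamma_\infty)\oplus\delta/2)$ using a compactness argument on $[0,T]_{\mb R}\times\sphere^2$, rather than just asserting that some finite-stage annulus works uniformly in $t$), but the idea is the same.

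The reverse containment is where you diverge from the paper, and where I think your argument has a genuine gap. You want to localize $\cf(\gamma_k,t_k)$ near an arbitrary $x_\infty\in\cf(\gamma_\infty,t_\infty)$ by flowing a thin barrier annulus $B$ forward from time $0$. But $x_\infty$ lies on the evolved curve, not on $\gamma_\infty$, and there is no canonical preimage of $x_\infty$ on $\gamma_\infty$ under curvature flow, so it is not clear where on $\gamma_\infty$ to place $B$ so that its evolution actually arrives near $x_\infty$ at time $t_\infty$. Worse, you face a scale conflict you acknowledge but do not resolve: geodesic circles on $\sphere^2$ shrink and vanish under curvature flow in time roughly proportional to the square of the radius, so a barrier thin enough to give $\eps$-localization will not survive to a fixed finite $t_\infty$, while one thick enough to survive gives no localization; and even if the two bounding circles survive, what the argument yields is that $\cf(\gamma_k,t_k)$ crosses the evolved band \emph{somewhere}, not that it crosses near $x_\infty$. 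The paper sidesteps all of this with a global topological argument: $\gamma_k$ winds once around the trapping annulus $A_1(0)$, the avoidance principle keeps $\cf(\gamma_k,t)$ winding once around $A_1(t)$, and the near-parameterization map $\phi:\cf(\gamma_k,t_k)\to\cf(\gamma_\infty,t_\infty)$ from Lemma~\ref{lemma-nearparam} is shown to be homotopic to the identity by a short geodesic homotopy staying inside $A_1$, hence has winding number one and is surjective, which is exactly the reverse containment. I would replace your local barrier argument with this degree-theoretic one; it avoids ever having to identify a time-$0$ preimage of $x_\infty$.

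One smaller point: Lemma~\ref{lemma-nearparam} earns its keep precisely here. Hausdorff closeness alone gives no continuous map between the curves, so you cannot talk about degree or surjectivity without something like it; your sketch does not produce such a map, which is part of why the reverse containment does not close.
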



\begin{proof}

By Lauer's theorem, there is some nested sequence of smooth closed annuli $A_{1,0} \supset A^\circ_{1,0} \supset A_{2,0} \supset A^\circ_{2,0} \supset A_{3,0} \supset  \cdots$ such that $\cf(\gamma_\infty,t) = \bigcap_{i=1}^\infty A_i(t)$ 
where the boundary of $A_i(t)$ 
evolves by curvature flow starting from 
$A_i(0) = A_{i,0}$  \cite[Theorem 1.1]{lauer2016evolution}. 
Note that this only holds for $t< \infty$, since all annuli evolving by curvature flow either vanish or envelop the sphere in the limit as $t \to \infty$, which is why we require $t_\infty$ to be finite. 
We may assume that $T$ is finite, since $t_k$ must be bounded by $2t_\infty$ for all $k$ sufficiently large.



Consider $\eps>0$.
Let $\delta$ be as implied by Lemma \ref{lemma-nearparam} for the curve $\cf(\gamma_\infty,t_\infty)$ and $\eps$. 
For a function $\phi$ from $[0,T]_\mb{R}$ to the power set of $\sphere^2$, let 
\[ \graph(\phi) = \{(t,y): t \in [0,T]_\mb{R}, y \in \phi(t) \}. \]

We claim that for all $i$ sufficiently large, $\graph(A_i) \subset \graph(\cf(\gamma_\infty) \oplus \delta/2)$.
Suppose not.  Then, we can restrict to a subsequence of $A_i$ where for each $i$ there is a point $p_i \in \graph(A_i \setminus (\cf(\gamma_\infty)\oplus\delta/2)) $.
Since $[0,T]_\mb{R}\times\sphere^2$ is compact, there is some subsequence $p_i$ that converges to a point $p = (t,q) \in [0,T]_\mb{R}\times\sphere^2$.
Then, $q$ is at least distance $\delta/2$ from $\gamma_\infty(t)$, so $p \not\in \graph (\cf(\gamma_\infty))$, so there is some $j$ such that $p \not\in\graph(A_j)$.
Since the set $\graph(A_j)$ is closed, $p_\infty$ is a positive distance $\delta_1$ from $\graph(A_j)$.
Since the sets $\graph(A_i)$ are nested, $p_\infty$ is at least distance $\delta_1$ from $\graph(A_i)$ for all $i \geq j$, but that contradicts that $p_\infty$ is the limit of a subsequence of the points $p_i$. 
Hence, the claim holds.

Let $j$ be large enough that $\graph(A_j) \subset \graph(\cf(\gamma_\infty)\oplus \delta/2 )$. 
The annulus boundary $\partial A_j(0)$ and $\gamma_\infty$ are closed and disjoint, so $\partial A_j(0)$ is a positive distance $\delta_2$ from $\gamma_\infty$.
For all $k$ sufficiently large, $\gamma_k \subset \gamma_\infty\oplus\delta_2$, so 
$\gamma_k \subset A_j(0)$, so by the avoidance principle, $\cf(\gamma_k,t) \subset A_j(t)$ for all $t\in[0,T]_\mb{R}$ \cite[Theorem 1.3]{angenent1991parabolic}.
In particular, $\cf(\gamma_k,t_k) \subset A_j(t_k)$, so by the above claim $\cf(\gamma_k,t_k) \subset \cf(\gamma_\infty,t_k)\oplus \delta/2$; see Figure \ref{figure-Hausdorff-converge}.

\begin{figure}
\includegraphics{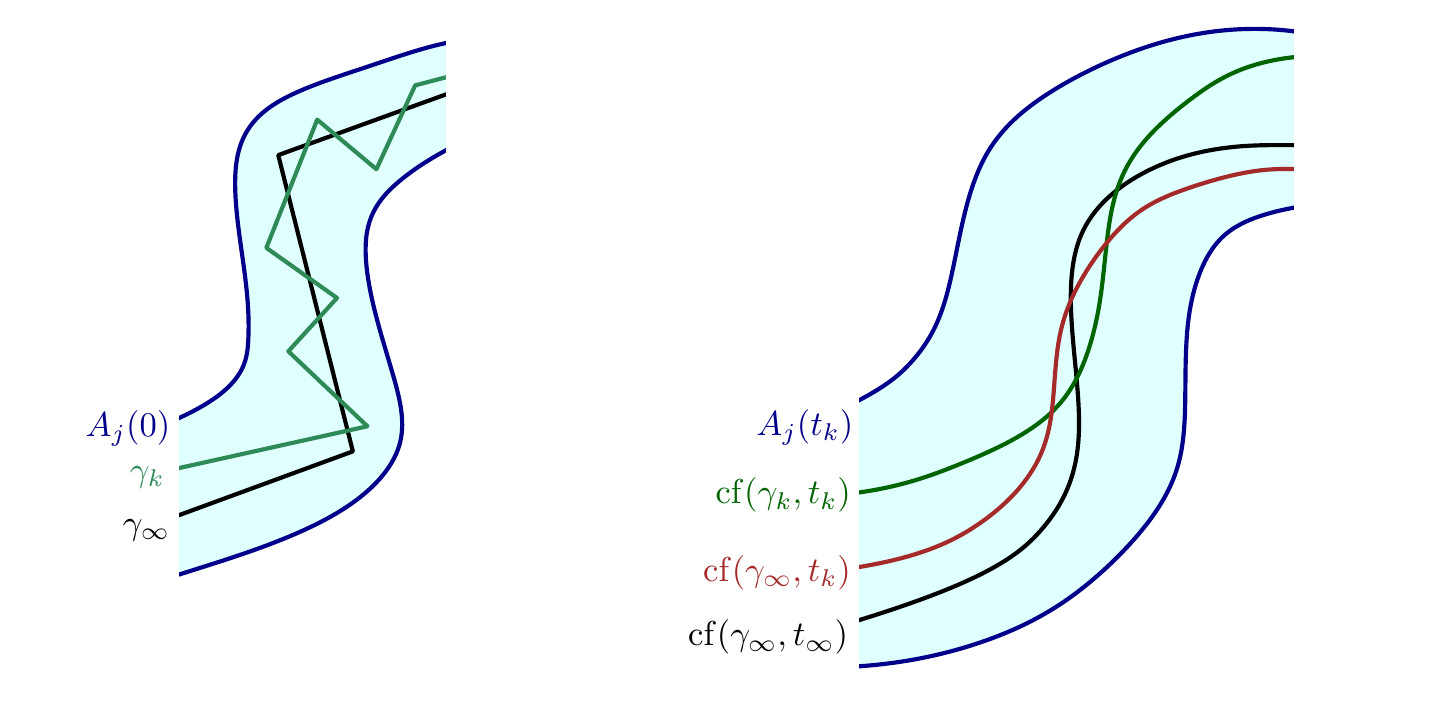}
\caption{To get $\cf(\gamma_k,t_k)$ close to $\cf(\gamma_\infty,t_\infty)$, choose a small annulus $A_j(0)$ about $\gamma_\infty$ to make $A_j(t)$ close to $\cf(\gamma_\infty,t)$ for all $t \leq T$, and make $k$ large enough that $\gamma_k$ is in $A_j(0)$, so  
$\cf(\gamma_k,t_k)$ is in $A_j(t_k)$, so $\cf(\gamma_k,t_k)$ is close to $\cf(\gamma_\infty,t_k)$, which is close to $\cf(\gamma_\infty,t_\infty)$.}
\label{figure-Hausdorff-converge}
\end{figure}

By definition, if $t>0$, then $\cf(\gamma_\infty,t)$ is smooth. 
In particular, this means that the geodesic curvature of $\cf(\gamma_\infty,t)$ along a unit speed parameterization is continuous, so by the Heine-Cantor theorem, the geodesic curvature of $\cf(\gamma_\infty,t)$ is bounded, which bounds the velocity of each point on $\cf(\gamma_\infty,t)$ moving by curvature flow, which is similarly bounded for all $t$ in a closed interval about $t_\infty$.
Hence, for all $k$ sufficiently large, $\cf(\gamma_\infty,t_k)$ and $\cf(\gamma_\infty,t_\infty)$ are at most $\delta/2$ apart in Fréchet distance.
Thus, $\cf(\gamma_k,t_k) \subset \cf(\gamma_\infty,t_\infty)\oplus\delta$.
By Lemma \ref{lemma-nearparam}, there is a continuous map $\phi: \cf(\gamma_k,t_k) \to \cf(\gamma_\infty,t_\infty)$ such that for all $x \in \cf(\gamma_k,t_k)$, $\|\phi(x)-x\|<\eps$.

Since $T<\infty$, the set $A_i(T)$ is still an annulus for all $i$ sufficiently large.  That is, both curves on the boundary of $A_i$ continue to evolve and are Jordan curves up to time $T$.  Let us just assume that $A_1(T)$ is an annulus. 
Since $\gamma_k \to \gamma_\infty$ in Fréchet distance, $\gamma_k$ winds once around $A_1(0)$ for $k$ sufficiently large, i.e.\ parameterizing $\gamma_k$ gives a loop in a generator of the fundamental group of $A_1(0)$.  
Since $\gamma_k$ cannot intersect the boundary of $A_1$ as they evolve \cite[Theorem~1.3]{angenent1991parabolic}, $\cf(\gamma_k,t)$ winds once around $A_1(t)$ for all $t\in[0,T]_\mb{R}$.
We may choose $\delta_3$ small enough that $A_1^\circ(t) \supset \cf(\gamma_\infty,t_\infty)$ for all $t \in [t_\infty-\delta_3,t_\infty+\delta_3]_\mb{R}$, and $A_1(t) \supset \cf(\gamma_\infty,t_\infty) \oplus \eps$ for all such $t$, provided that $\eps$ is sufficiently small. 
Let $\psi : \cf(\gamma_k,t_k) \times [0,1]_\mb{R} \to \sphere^2$ by $\psi(x,t) = \proj_{\sphere^2}(t\phi(x) +(1-t)x)$ where $\proj_{\sphere^2}$ normalizes non-zero vectors to the unit sphere.
Then, $\psi$ is a homotopy from the identity map on $\cf(\gamma_k,t_k)$ to $\phi$ that is within $\cf(\gamma_\infty,t_\infty) \oplus \eps \subset A_1(tt_\infty+(1-t)t_k)$.
Since $\cf(\gamma_k,t_k)$ winds once around $A_1(t_k)$, 
the map $\phi=\psi(\cdot,1)$ winds once around $A_1(t_\infty)$, which implies that $\phi$ actually winds once around $\cf(\gamma_\infty,t_\infty)$. 
Therefore, $\phi$ is surjective.
Hence, for all $k$ sufficiently large, $\cf(\gamma_\infty,t_\infty) \subset \cf(\gamma_k,t_k)\oplus\eps$ and $\cf(\gamma_k,t_k) \subset \cf(\gamma_\infty,t_\infty)\oplus\eps$,   
so $\dist_H(\cf(\gamma_k,t_k),\cf(\gamma_\infty,t_\infty)) \leq \eps$. 

By letting $\eps \to 0$, we get that 
$\cf(\gamma_k,t_k) \to \cf(\gamma_\infty,t_\infty)$ in Hausdorff distance. 
Note that if $\phi$ were bijective, then we would get convergence in Fréchet distance, but this argument only shows that $\phi$ is surjective. 
\end{proof}

\begin{lemma}\label{lemma-crossingpaths}
Let $\gamma_1, \gamma_2 \in \jord_S$ be a pair of curves that intersect at finitely many points, 
and let $p_\mr{T} \in (\cf(\gamma_1,T) \cap \cf(\gamma_2,T))$ be a point of intersection at time $T < S$.  Then, there is a continuous trajectory $p: [0,T]_\mb{R} \to \sphere^2$ such that for all $t \in [0,T]_\mb{R}$, we have $p(t) \in  (\cf(\gamma_1,t) \cap \cf(\gamma_2,t))$ and $p(T) = p_\mr{T}$.
\end{lemma}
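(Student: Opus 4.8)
The plan is to construct the trajectory $p$ by running time \emph{backwards} from $p_\mr{T}$, using compactness to patch together local pieces. First I would fix the two evolving curves $\alpha(t) = \cf(\gamma_1,t)$ and $\beta(t) = \cf(\gamma_2,t)$, which are smooth Jordan curves for $t \in (0,S)$ by Lauer's theorem, and define the set of ``shared points'' $P = \{(t,q) : t \in [0,T]_\mb{R},\ q \in \alpha(t) \cap \beta(t)\} \subset [0,T]_\mb{R} \times \sphere^2$. The goal is to show that the connected component of $P$ containing $(T,p_\mr{T})$ surjects onto $[0,T]_\mb{R}$ under the time projection; choosing a continuous selection along that component gives $p$. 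The key structural input is Angenent's theorem \cite{angenent1991parabolic}: since $\gamma_1$ and $\gamma_2$ are distinct and intersect finitely often, the number of intersection points of $\alpha(t)$ and $\beta(t)$ is finite and \emph{non-increasing} in $t$ (for $t>0$, with the initial finite count controlling everything), and moreover intersections are transverse except at isolated times where pairs of intersection points merge and disappear. So for $t>0$ the fiber $P_t = \alpha(t)\cap\beta(t)$ is a finite set whose cardinality only drops as $t$ increases.

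The main step is a local continuation argument. Fix $t_0 \in (0,T]$ and $q_0 \in P_{t_0}$. If $\alpha(t_0)$ and $\beta(t_0)$ cross transversally at $q_0$, then by smoothness of curvature flow on $(0,S)$ (continuous dependence of the smooth flow on $t$, plus the implicit function theorem applied to a local defining equation for $\alpha(t)\cap\beta(t)$), there is $\epsilon>0$ and a continuous arc $q : (t_0-\epsilon, t_0+\epsilon) \to \sphere^2$ with $q(t) \in P_t$ and $q(t_0)=q_0$. If instead $q_0$ is a non-transverse (tangential) intersection, then by Angenent's analysis this can only happen at finitely many times, and just \emph{below} such a time the tangency resolves into two nearby transverse intersections (or a transverse one persists) — so a continuous continuation to slightly smaller $t$ still exists, even though continuation to larger $t$ may fail. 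This is the crucial asymmetry that makes running time backwards the right move: going backwards from $p_\mr{T}$, intersection points never disappear, they only split or persist, so the continuation never gets stuck. I would then define $p$ by: start at $p(T)=p_\mr{T}$; extend backwards as far as possible; let $t^* = \inf$ of the interval of definition of a maximal continuous backward extension. If $t^*>0$, then by compactness of $\sphere^2$ the curve $p$ has an accumulation point $q^*$ as $t \downarrow t^*$; since $P$ is closed (it is the intersection locus of two continuous families of closed sets), $(t^*,q^*) \in P$, and $t^* > 0$ lets us re-apply the local continuation argument at $(t^*,q^*)$ to extend past $t^*$ — contradiction. Hence $t^* = 0$, and taking an accumulation point at $t=0$ (again $P$ is closed) lands $p(0) \in \gamma_1 \cap \gamma_2$, completing the trajectory on all of $[0,T]_\mb{R}$.

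The main obstacle I anticipate is making the behavior at non-transverse intersection times fully rigorous: one needs that the singular times of $\alpha(t)\cap\beta(t)$ form a finite (or at least closed, nowhere dense) set and that near each such time the intersection locus is a finite union of continuous arcs with no arc terminating as $t$ decreases. Angenent's parabolic intersection comparison gives exactly the non-increase of the count and the generic-transversality picture, so I would cite \cite[Theorem~1.3]{angenent1991parabolic} for this and phrase the continuation lemma as: for every $(t_0,q_0) \in P$ with $t_0>0$ there is a continuous $q:(t_0-\epsilon,t_0] \to \sphere^2$ through it with $q(t)\in P_t$. A secondary subtlety is the $t=0$ endpoint, where the flow is only continuous into $\jord$ in Fréchet distance rather than smooth; but closedness of $P$ together with Fréchet (hence Hausdorff) convergence $\alpha(t)\to\gamma_1$, $\beta(t)\to\gamma_2$ as $t\to 0$ is enough to guarantee the accumulation point at $t=0$ lies in $\gamma_1\cap\gamma_2$, so no smoothness at $t=0$ is needed.
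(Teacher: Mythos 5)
Your overall skeleton — run time backwards from $p_\mr{T}$, track transversal intersections by an implicit-function-theorem argument, treat $P = \{(t,q): q\in\alpha(t)\cap\beta(t)\}$ as a closed set, and continue past obstructions by compactness — matches the paper's strategy. But the step you yourself flag as the ``main obstacle'' is exactly where a genuine gap remains, and it is not filled by citing \cite[Theorem~1.3]{angenent1991parabolic} alone. You assert that at a tangential intersection time the tangency ``resolves into two nearby transverse intersections'' just below, so continuation to smaller $t$ never gets stuck. What Angenent's theorem gives is that the total \emph{count} of intersections is finite, non-increasing, and drops at tangency times; it does \emph{not} directly say that a tangential intersection point $p_\mr{T}$ is a limit of intersection points from below in time. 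A priori the $n$ transversal trajectories from $t<T$ could all accumulate on the \emph{other} intersection points at time $T$, with $p_\mr{T}$ ``appearing'' fresh while the count still drops, which is consistent with non-increase alone. The paper proves the needed localization with a separate construction: it builds a new oriented Riemannian surface $M$ by taking tubular annular neighborhoods $A_1, A_2$ of the two curves and gluing them \emph{only} along a small rectangle $R_0$ around $p_\mr{T}$, so that in $M$ the two evolving curves are disjoint for $t<T$ but intersect at $t=T$ — directly contradicting Angenent's theorem applied in $M$. Your proposal does not supply this (or any substitute) argument, so the backward continuation at a tangency time is not justified.

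A secondary, smaller issue: having an accumulation point of $p(t)$ as $t\downarrow t^*$ (or $t\downarrow 0$), together with closedness of $P$, only shows the accumulation set meets $P_{t^*}$; it does not show the trajectory actually \emph{converges}, which is what continuity at the endpoint requires. The paper closes this with an oscillation argument: if $p(t)$ had two distinct accumulation points, the intermediate value theorem along the trajectory together with finiteness/discreteness of the limiting intersection set forces a contradiction. You would need to add an argument of this kind at both $t^*$ (if you set it up that way) and at $t=0$. Once those two gaps are filled, your proposal is essentially the paper's proof.
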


\begin{proof}
Sigurd Angenet showed that the evolutions by curvature flow starting from distinct curves $\gamma_1$ and $\gamma_2$ meet tangentially at only a discrete set of times \cite[Theorem 1.1]{angenent1991parabolic}, and that the number of intersection points after positive time is finite and non-increasing and remains constant between the times where the curves meet tangentially \cite[Theorem 1.3]{angenent1991parabolic}.  

We can continually track transversal intersection points.
Specifically, given a transversal intersection point $q(t) \in \cf(\gamma_1,t) \cap \cf(\gamma_2,t)$, let $q$ move with velocity 
\[\partial_t q(t) = P_{1,t}(\kgn\cf(\gamma_1,t)) + P_{2,t}(\kgn\cf(\gamma_2,t))\] 
where $P_{1,t}$ is the oblique projection in the tangent fiber over $q(t)$ along the line tangent to $\gamma_1(t)$  to the line tangent to $\gamma_2(t)$, and $P_{2,t}$ is the analogs projection. 
Let us consider the orthogonal projection of $\partial_t q(t)$ to the normal line of $\cf(\gamma_1,t)$. 
Since $P_{1,t}$ projects obliquely along the tangent, the projection to the normal line is unchanged by the action of $P_{1,t}$, so the orthogonal projection of the first term $P_{1,t}(\kgn\cf(\gamma_1,t))$ to the normal line is the vector of geodesic curvature $\kgn\cf(\gamma_2,t)$. 
Since $P_{2,t}$ projects to the tangent, the projection of the second term $P_{2,t}(\kgn\cf(\gamma_2,t))$ to the normal line vanishes.  Hence, the orthogonal projection of $\partial_t q(t)$ to the normal line of $\cf(\gamma_1,t)$ is the same as the velocity of curvature flow, so $q(t)$ moves along $\cf(\gamma_1,t)$ as the curve evolves.  
Similarly, $q(t)$ moves along $\cf(\gamma_2,t)$ as the curve evolves, so 
$q(t)$ continues to be a point of intersection provided that the intersection remains transversal.
Therefore, between times of tangential intersection, the intersection points can be partitioned into a finite set of disjoint trajectories.

We will now see that at times of tangential intersection, the points of intersection may merge, but cannot jump to or appear at new locations.  
That is, each tangential intersection is the limit of intersection points from below in time.

Assume for the sake of contradiction that $T>0$ and $p_\mr{T}$ is a point where $\cf(\gamma_1,T)$ and $\cf(\gamma_2,T)$ intersect tangentially, and that there is some bound $\eps > 0$ such that for all $t < T$ sufficiently close to $T$,  the set $\cf(\gamma_1,t) \cap \cf(\gamma_2,t)$ is bounded away from $p_\mr{T}$ by at least $\eps$. 
Angenet also showed that for a pair of curves evolving by curvature flow on an oriented Riemannian surface, the number of intersection points is decreasing \cite[Theorem 1.3]{angenent1991parabolic}.  
We will make a contradiction by constructing a new surface around the curves $\cf(\gamma_i,T)$ so that their evolution a short time before $T$ violates Angenet's theorem. 

We construct the surface as follows. 
Since $T > 0$, the curves $\cf(\gamma_i,T)$ are smooth, so we can choose a local coordinate system on the surface of the sphere that makes the $\cf(\gamma_i,T)$ implicit functions in a small patch around $p_\mr{T}$.
Specifically, a small arc of $\cf(\gamma_i,T)$ around $p_\mr{T}$ can be expressed by the latitude where each curve of longitude intersects the arc of $\cf(\gamma_i,T)$.
Let $R_0$ be a closed spherically rectangular neighborhood of $p_\mr{T}$ on $\sphere^2$ with diameter at most $\eps$ where $\cf(\gamma_1,T)$ and $\cf(\gamma_2,T)$ are implicit functions.  
Let $A_i$ be a closed annulus with smooth boundary such that $\cf(\gamma_i,T)$ is in the interior of $A_i$, and $\partial A_i \cap R_0$ is a pair of curves that are implicit functions, and $A_1\cap A_2 \cap \partial R_0 = \emptyset$.  We could find such an $A_i$ by constructing a tubular neighborhood about $\cf(\gamma_i,T)$ such that in $R_0$ the fibers are along curves of longitude. 
Let $R_i$ be the closure of $A_i \setminus R_0$.  Let $M$ be the disjoint union of $R_1$ and $R_2$ together with $R_0$ maintaining the intersections $R_1\cap R_0$ and $R_2 \cap R_0$.
Observe that $M$ is a smooth oriented Riemannian surface. 


Since $\cf(\gamma_i,T)$ is in the interior of the annulus $A_i$, $\cf(\gamma_i,T)$ is bounded away from the boundary of $A_i$, so for some $t_0 < T$ sufficiently close to $T$, the curve $\cf(\gamma_i,t)$ is in the interior of $A_i$ for all $t \in [t_0,T]_\mb{R}$.  Let $\cf_{M}(\gamma_i)$ be the evolution by curvature flow in $M$ starting from $\cf_{M}(\gamma_i,0) = \cf(\gamma_i,t_0)$.  Then, $\cf_{M}(\gamma_i,t)$ coincides with $\cf(\gamma_i,t+t_0)$ as a curve in $A_i$.  Hence, the curves $\cf_{M}(\gamma_1,t)$ and $\cf_{M}(\gamma_2,t)$ do not intersect for $t<T-t_0$ as they evolve, but they do intersect at $t=T-t_0$, which contradicts Angenet's theorem. 
Thus, our assumption cannot hold, so for each $k$ there is some $t_k$ and a point $q_k \in \cf(\gamma_1,t_k) \cap \cf(\gamma_2,t_k)$ such that $t_k \to T$ and $q_k \to p_\mr{T}$. 

Let $t_0$ be an earlier time than $T$ such that there are no tangential intersections during the interval $(t_0,T)_\mb{R}$.	
As we already showed above, we can partition the intersection graphs $\graph(\cf(\gamma_1)\cap\cf(\gamma_2)) = \{(t,\cf(\gamma_1,t)\cap \cf(\gamma_2,t)) : t \in (t_0,T)_\mb{R}\}$ into finitely many trajectories.  One of these trajectories, which we call $p$, must contain infinitely many of the points $q_k$,
so $p(t_k) \to p_\mr{T}$.
If there were some other sequence of times $r_k \to T$ from below such that $p(r_k) \not\to p_\mr{T}$, then such sequences of $p(t)$ would converge to infinitely many different points where $\cf(\gamma_1,T)$ and $\cf(\gamma_2,T)$ intersect, which would violate Angenet's theorem. 
Specifically, we could restrict $p(r_k)$ to a convergent subsequence bounded away from $p_\mr{T}$ by some $b> 0$, and for each $m > 1$ there would be a sequence of times $s_k \to T$ such that $p(s_k)$ is distance $b/m$ from $p(t_k)$ and converges to a point of intersection that is distance $b/m$ from $p_\mr{T}$. 
Since sequential continuity implies continuity, we have $p(t) \to p_\mr{T}$ as $t \to T$ from below.

We now have in either case where $p_\mr{T}$ is a point of transversal or tangential intersection, there is a trajectory $p$ such that $p(T) = p_\mr{T}$ and $p(t) \in \cf(\gamma_1,t)\cap \cf(\gamma_2,t)$ for all $t \in [t_0,T]_\mb{R}$ where $t_0$ is the precious time that a transversal intersection occurs or is 0 in the case where no transversal intersection occurs before $T$.  
Since the number of intersections is finite and can only decrease at a time of common tangency \cite[Theorem 1.3]{angenent1991parabolic}, only finitely many times of tangency occur. 
Therefore, by induction on the number of times of transversal intersections after time $t$, the trajectory $p$ can be extended so that $p(t) \in \cf(\gamma_1,t)\cap \cf(\gamma_2,t)$ for all $t \in (0,T]_\mb{R}$. 
It only remains to show that the trajectory converges as $t \to 0$.

Suppose for the sake of contradiction that $p(t)$ does not converge as $t \to 0$. 
Then, we could find two sequences of times $r_k,t_k \to 0$ that are bounded apart. 
Since the sphere is compact, we may assume that $p(r_k)$ and $p(t_k)$ respectively converge to points $p_\mr{r},p_\mr{t} \in (\gamma_1\cap\gamma_2)$; otherwise restrict to a convergent subsequences. 
Furthermore, we may choose the sequences so that they are alternating, i.e.\ $r_1>t_1>r_2>t_2>\dots$. 
Let $d> 0$ be the minimum distance between points where $\gamma_1$ and $\gamma_2$ intersect.
Let $s_k$ be the first time after $r_k$ such that $p(s_k)$ is distance $d/2$ from $p(r_k)$.
Again, we may assume by compactness that $p(s_k)$ converges to a point $p_\mr{s} \in (\gamma_1\cap\gamma_2)$.
Then, $p_\mr{s}$ would be a point of intersection that is distance $d/2$ from $p_\mr{r}$, contradicting our choice of $d > 0$ as the minimum such distance. 
Thus, $p(t)$ must converge as $t \to 0$,
and therefore there is a trajectory $p : [0,T]_\mb{R} \to \sphere^2$ that remains in the intersection of the curves as they evolve. 
\end{proof}

\begin{lemma}\label{lemma-crossingcontinuous}
Let $\gamma,\eta \in \bis$ be a pair of curves that intersect at only 2 points, and let $x$ be one of the points of intersection.
Then, there is a unique continuous trajectory $p(\gamma,\eta,x) : [0,\infty) \to \sphere^2$ such that for all $t$ we have $p(\gamma,\eta,x;t) \in (\cf(\gamma_1,t)\cap\cf(\gamma_2,t))$ and $p(\gamma,\eta,x;0) = x$. 
Furthermore, $p(\gamma,\eta,x,t)$ is continuous as a function of $\gamma$ and $\eta$ in Fréchet distance and $x$ and $t$.
\end{lemma}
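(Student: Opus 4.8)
The plan is to first pin down the combinatorics of the evolving intersection, then extract the trajectory from the construction in the proof of Lemma~\ref{lemma-crossingpaths}, and finally prove joint continuity by a compactness argument combining Lemma~\ref{lemma-flowconvergesinhausdorff} with a short‑time estimate that is uniform in the initial curves. \emph{The combinatorics.} Curvature flow preserves the bisector condition: by the Gauss--Bonnet theorem the enclosed area $a(t)$ of a smooth curve evolving by curvature flow on $\sphere^2$ satisfies $a'(t)=a(t)-2\pi$, so a bisector stays a bisector, and this is in any case implicit in Lauer's and Gage's theorems; hence $\cf(\gamma,t),\cf(\eta,t)\in\bis$ for all $t\in[0,\infty)$. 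Two distinct bisectors must meet: if $\cf(\gamma,t)$ and $\cf(\eta,t)$ were disjoint then one would lie in a complementary open disk of area $2\pi$ of the other, forcing the annular region between them to have area $0$, which is impossible. Combining this with Angenent's theorem \cite[Theorem~1.3]{angenent1991parabolic} --- the number of intersections is finite for $t>0$, non‑increasing, constant between the discrete times of tangency, and even at transversal times --- and with the initial value two, one gets that $\cf(\gamma,t)$ and $\cf(\eta,t)$ are distinct bisectors meeting at exactly two points for every $t$, transversally for $t>0$, with no tangencies (a tangency would force the count below $2$). The one point needing care here is that the count does not jump up at $t=0^{+}$ when $\gamma,\eta$ are irregular; I would handle this using that Lauer's regularization does not create new intersections.

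\emph{Existence and uniqueness of the trajectory.} By the analysis in the proof of Lemma~\ref{lemma-crossingpaths}, applied on $[0,T]$ for each finite $T$, transversal intersection points move along disjoint continuous trajectories and every trajectory extends continuously to $t=0$ with limit in $\gamma\cap\eta$. Since by the previous step there are always exactly two intersection points, distinct and transversal for $t>0$ and coinciding with $\{x,y\}\dfeq\gamma\cap\eta$ at $t=0$, the intersection graph over $[0,\infty)$ is the disjoint union of exactly two continuous trajectories $p_1,p_2:[0,\infty)\to\sphere^2$ with $\{p_1(t),p_2(t)\}=\cf(\gamma,t)\cap\cf(\eta,t)$ and $p_1(t)\neq p_2(t)$ for all $t$; relabel so that $p_1(0)=x$ and set $p(\gamma,\eta,x;\cdot)\dfeq p_1$, $p(\gamma,\eta,y;\cdot)\dfeq p_2$. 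For uniqueness, if $q:[0,\infty)\to\sphere^2$ is continuous with $q(0)=x$ and $q(t)\in\cf(\gamma,t)\cap\cf(\eta,t)$ for all $t$, then $\{t:q(t)=p_1(t)\}$ and $\{t:q(t)=p_2(t)\}$ are closed, disjoint (since $p_1(t)\neq p_2(t)$), and cover the connected set $[0,\infty)$, and the first contains $0$; hence $q\equiv p_1$.

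\emph{Joint continuity, and a uniform short‑time estimate.} It suffices to prove sequential continuity, so let $\gamma^{(n)}\to\gamma$ and $\eta^{(n)}\to\eta$ in Fréchet distance, $x^{(n)}\to x$, $t^{(n)}\to t$, all in the domain, and write $p^{(n)}\dfeq p(\gamma^{(n)},\eta^{(n)},x^{(n)};\cdot)$ and $z^{(n)}\dfeq p^{(n)}(t^{(n)})$. By compactness of $\sphere^2$ it is enough to show every subsequential limit $z$ of $(z^{(n)})$ equals $p(\gamma,\eta,x;t)$. First, Lemma~\ref{lemma-flowconvergesinhausdorff} gives $\cf(\gamma^{(n)},t^{(n)})\to\cf(\gamma,t)$ and $\cf(\eta^{(n)},t^{(n)})\to\cf(\eta,t)$ in Hausdorff distance, and $z^{(n)}$ lies in both, so $z\in\cf(\gamma,t)\cap\cf(\eta,t)=\{p(\gamma,\eta,x;t),p(\gamma,\eta,y;t)\}$. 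To rule out the wrong branch I use a confinement uniform in $n$: fix a thin smooth annulus $A$ with $\gamma$ in its interior; for $n$ large $\gamma^{(n)}$ lies in the interior of $A$, so by the avoidance principle $\cf(\gamma^{(n)},s)$ stays inside the evolving annulus $A(s)$, and $A(s)\subseteq\gamma\oplus\delta$ for all $s$ in an interval $[0,\tau_0]$ whose length depends only on $A$ and $\delta$, not on $n$; likewise for $\eta$. Since $\bigcap_{\delta>0}\big((\gamma\oplus\delta)\cap(\eta\oplus\delta)\big)=\gamma\cap\eta=\{x,y\}$, for every $r>0$ there is $\delta_r>0$ with $(\gamma\oplus\delta_r)\cap(\eta\oplus\delta_r)\subseteq(\{x\}\oplus r)\cup(\{y\}\oplus r)$, so, choosing $r<\tfrac12\dist(x,y)$, there is $\tau_r>0$ independent of $n$ with $p^{(n)}(s)\in(\{x\}\oplus r)\cup(\{y\}\oplus r)$ for all $s\in[0,\tau_r]$ and $n$ large; since $p^{(n)}(0)=x^{(n)}\in\{x\}\oplus r$ and $p^{(n)}$ is continuous, in fact $p^{(n)}([0,\tau_r])\subseteq\{x\}\oplus r$. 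If $t=0$, then $t^{(n)}\le\tau_r$ eventually, so $z\in\{x\}\oplus r$ for every such $r$, giving $z=x=p(\gamma,\eta,x;0)$.

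\emph{The case $t>0$, and the main obstacle.} Here I claim that along a subsequence $p^{(n)}$ converges locally uniformly on $(0,\infty)$ to $p(\gamma,\eta,x;\cdot)$. On each compact $[\epsilon,T]\subset(0,\infty)$ the flows $\cf(\gamma^{(n)},\cdot)$ and $\cf(\eta^{(n)},\cdot)$ converge in $C^{2}$, uniformly in $s\in[\epsilon,T]$, to $\cf(\gamma,\cdot)$ and $\cf(\eta,\cdot)$ --- this is continuous dependence of the smooth flow on initial data, which I would take from parabolic regularity together with Lauer's results --- so their transversal intersection points move with velocity (the oblique‑projection expression from the proof of Lemma~\ref{lemma-crossingpaths}) bounded uniformly in $n$ and $s\in[\epsilon,T]$; hence $(p^{(n)})$ is uniformly equicontinuous there, and by Arzel\`a--Ascoli and a diagonal argument a subsequence converges locally uniformly on $(0,\infty)$ to a continuous $\bar p$. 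Then $\bar p(s)\in\cf(\gamma,s)\cap\cf(\eta,s)=\{p_1(s),p_2(s)\}$, and by continuity and $p_1(s)\neq p_2(s)$ either $\bar p\equiv p_1$ or $\bar p\equiv p_2$ on $(0,\infty)$; the confinement above forces $\bar p(s)\in\{x\}\oplus r$ for $s\le\tau_r$, hence $\bar p(s)\to x$ as $s\to0$, which selects $\bar p\equiv p_1=p(\gamma,\eta,x;\cdot)$, and so $z=\lim p^{(n)}(t^{(n)})=\bar p(t)=p(\gamma,\eta,x;t)$. In every case $z=p(\gamma,\eta,x;t)$, which proves the lemma. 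I expect the crux to be exactly this: preventing the selected trajectory from jumping onto the other intersection branch in the limit, which is why one must combine the uniqueness statement with the $n$‑uniform short‑time confinement near $t=0$; the two supporting inputs that cost the most bookkeeping are the $C^{2}$ locally‑uniform continuous dependence of the smooth flow for $t>0$ and the fully rigorous version of the combinatorial step (exactly two intersection points, no tangencies) for bisectors that need not be smooth or rectifiable.
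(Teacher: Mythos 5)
Your existence/uniqueness part matches the paper's in spirit: both use Lemma~\ref{lemma-crossingpaths} to get two trajectories, and both use the bisector property to keep the intersection count at exactly 2 (the paper argues disjointness of the two trajectories slightly differently — if they met at a last time $t_0$, then either an arc of $\cf(\gamma,t)$ would shrink to a point, making $\cf(\gamma,t_0)$ lie on one side of $\eta$, or $\cf(\gamma,t_0)$ would be two arcs joined at a single point and hence not Jordan; yours via "tangency would force the count below 2" reaches the same conclusion). Your connectedness argument for uniqueness is clean and is only implicit in the paper.

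Where you genuinely diverge, and where a real gap opens, is joint continuity for $t>0$. You invoke $C^{2}$ locally‑uniform convergence of $\cf(\gamma^{(n)},\cdot)$ and $\cf(\eta^{(n)},\cdot)$ on $[\epsilon,T]$ to get equicontinuity of the trajectories $p^{(n)}$ and then apply Arzel\`a--Ascoli. That $C^{2}$ continuous dependence is not proved anywhere in the paper, and it does not follow formally from the results it quotes (Lauer's theorems and Lemma~\ref{lemma-flowconvergesinhausdorff} give only Hausdorff convergence of the level‑set flows, not convergence of derivatives). It is plausible via interior parabolic estimates uniform over the nested annuli, but making that rigorous — especially with non‑smooth, possibly non‑rectifiable initial bisectors — is exactly the kind of extra machinery the paper is structured to avoid. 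The paper instead rules out the wrong‑branch accumulation point purely topologically: if $q_\infty$ were an accumulation point of $p_k$, take the infimum over times $t_\infty$ at which this can occur, find a competing sequence $r_k\to t_\infty$ with $p(\cdot;r_k)\to p_\infty$, and by the intermediate value theorem pick $s_k\in[r_k,t_k]$ with $p(\cdot;s_k)$ equidistant from the two endpoints; a subsequential limit of $p(\cdot;s_k)$ is then, by Lemma~\ref{lemma-flowconvergesinhausdorff}, a third intersection point of $\cf(\gamma_\infty,t_\infty)$ and $\cf(\eta_\infty,t_\infty)$, which is impossible. This argument needs nothing beyond Hausdorff convergence and the continuity of each individual trajectory. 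Your short‑time confinement near $t=0$ (avoidance principle inside a fixed thin annulus, giving a $\tau_r$ independent of $n$) is correct and is the one piece that does the same job as the paper's step isolating the accumulation set inside $\{p_\infty,q_\infty\}$; but the $t>0$ leg of your argument as written rests on an unproved $C^{2}$ input, so you should either replace it with the paper's IVT/compactness contradiction or supply the parabolic estimate.
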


\begin{proof}
The curves $\cf(\gamma,t)$ and $\cf(\eta,t)$ must intersect at exactly two points throughout their evolution for $t<\infty$, since they cannot intersect in more than 2 by Angenet's theorem, and they cannot intersect in fewer than 2, since they remain as bisectors throughout the deformation, which implies that one curve cannot be properly contained in the region on one side of the other curve. 
By Lemma \ref{lemma-crossingpaths}, for each $T \in (0,\infty)_\mb{R}$, 
there is a pair of trajectories $p_0,p_1$ that are always in the intersection of the evolving curve and that arrive at the two distinct intersection point at time $T$. 

We claim that the points $p_0(t)$ and $p_1(t)$ are distinct for each $t \leq T$. 
Suppose not, and let $t_0$ be the last time where $p_0(t_0)$ and $p_1(t_0)$ coincide. 
Note that there must be such a last time $t_0$ since the $p_i$ are continuous and so the $p_i$ coincide on a closed set, and that $t_0<T$. 
Observe that for $t > t_0$, $\cf(\gamma,t)$ consists of a pair of arcs connecting the points $p_0(t)$ and $p_1(t)$.  If either of these arcs were to shrink to a point as $t \to t_0$ from above, then $\cf(\gamma,t_0)$ would be properly contained in the region on one side of $\eta$, which is impossible, since both curves are bisectors.  On the other hand, if neither were to shrink to a point, then $\cf(\gamma,t_0)$ would consist of pair of arcs from $p_0(t_0) = p_1(t_0)$ to itself, which is not even a simple closed curve, so this is also impossible.  In either case were get a contradiction, so the trajectories never coincide. 
Let $p(\gamma,\eta,x) : [0,\infty)_\mb{R} \to \sphere^2$ be the trajectory starting from $p(\gamma,\eta,x;0) = x$. It only remains to show continuity. 

Consider $\gamma_k,\eta_k \in \bis$ and $x_k \in \gamma_k\cap\eta_k$ and $t_k \in [0,\infty)_\mb{R}$ for $k \in \{1,\dots,\infty\}$ such that $\gamma_k \to \gamma_\infty$ and $\eta_k \to \eta_\infty$ in Fréchet distance and $x_k \to x_\infty$ and $t_k \to t_\infty$. 
Let $p_k = p(\gamma_k,\eta_k,x_k;t_k)$ and let $q_k \in \cf(\gamma_k,t_k)\cap \cf(\eta_k,t_k)$ be the other point in the intersection. 
By Lemma \ref{lemma-flowconvergesinhausdorff}, 
$\cf(\gamma_k,t_k) \to \cf(\gamma_\infty,t_\infty)$ and $\cf(\eta_k,t_k) \to \cf(\eta_\infty,t_\infty)$ 
in Hausdorff distance.

We claim that the accumulation set of $p_k$ is contained in the pair of points 
$\{p_\infty,q_\infty\}$.
That is, every convergent subsequence converges to either $p_\infty$ or $q_\infty$.
Consider $\eps>0$, and let $\delta = \delta(\eps)$ be the distance between $\cf(\gamma_\infty,t_\infty) \setminus X_\eps$ and $\cf(\eta_\infty,t_\infty) \setminus X_\eps$ where $X_\eps = \{p_\infty,q_\infty\} \oplus\eps$.
Observe that $\delta > 0$, since the closures of these sets are compact and disjoint. 
Observe also that $(\cf(\gamma_\infty,t_\infty) \oplus \delta/3) \setminus X_\eps$ and $(\cf(\eta_\infty,t_\infty) \oplus \delta/3) \setminus X_\eps$ are disjoint.
For all $k$ sufficiently large, $\cf(\gamma_k,t_k)$ is within $\delta/3$ of $\cf(\gamma_\infty,t_\infty)$ 
in Hausdorff distance, and analogously for $\eta_k$. 
Hence, $\cf(\gamma_k,t_k)$ and $\cf(\eta_k,t_k)$ only intersect within $X_\eps$, 
so $p_k$ is within $\eps$ of $\{p_\infty,q_\infty\}$.
Since this holds for all $\eps >0$, the accumulation set is contained in the pair $\{p_\infty,q_\infty\}$. 

Suppose for the sake of contradiction that $q_\infty$ were an accumulation point of $p_k$. 
With $\gamma_k,\eta_k,x_k$ fixed, 
we could choose a sequence $t_k$ such that $t_\infty$ is the infimum of times for which there exists some convergent sequence of times $t_k \to t_\infty$ such that $p_k \to q_\infty$.
Since $p(\gamma_k,\eta_k,x_k;0) = x_k \to x_\infty = p(\gamma_\infty,\eta_\infty,x_\infty;0)$, we know there is some convergent sequence of times $r_k \to r_\infty$ such that $p(\gamma_k,\eta_k,x_k;r_k) \to p(\gamma_\infty,\eta_\infty,x_\infty;r_\infty)$.
By definition of $t_\infty$, we could choose a sequence $r_k\leq t_\infty$ so that $r_\infty = t_\infty$. 
More precisely, we could choose $r_k < t_\infty$ in the case where $t_\infty > 0$, or $r_k = 0$ in the case where $t_\infty = 0$.

We would then have a sequence of trajectories 
$p(\gamma_k,\eta_k,x_k;t)$ on $t \in [r_k,t_k]_\mb{R}$ 
with end-points respectively converging to $p_\infty$ and $q_\infty$. 
Since $p(\gamma_k,\eta_k,x_k)$ is a continuous function, we could choose a sequence of times $s_k \in [r_k,t_k]_\mb{R}$ such that $p(\gamma_k,\eta_k,x_k;s_k)$ is equidistant from the end-points. 
Since $\sphere^2$ is compact, we may assume that $p(\gamma_k,\eta_k,x_k;s_k)$ converges to a point $z$;
otherwise restrict to a convergent subsequence.
Since 
$\cf(\gamma_k,s_k) \to \cf(\gamma_\infty,t_\infty)$ and $\cf(\eta_k,s_k) \to \cf(\eta_\infty,t_\infty)$
in Hausdorff distance, $z$ would be a third point of intersection of $\cf(\gamma_\infty,t_\infty)$ and $\cf(\eta_\infty,t_\infty)$ that is equidistant from $p_\infty$ and $q_\infty$, but that is impossible.
Thus, $q_\infty$ cannot be an accumulation point of $p_k$.  
Therefore, $p_k \to p_\infty$, so $p$ is continuous.
\end{proof}

\vbox{
\begin{lemma}\label{lemma-crossingcurve}
Let $\gamma \in \bis$, $T \in (0,\infty)_\mb{R}$, and $z \in \cf(\gamma,T)$. 
Then, there is a curve $\zeta \in \bis$ that intersect $\gamma$ at exactly 2 points, is smooth away from $\gamma$, and evolves to pass through $z$ at time $T$, i.e.\ $z \in \cf(\zeta,T)$.
\end{lemma}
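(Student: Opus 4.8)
The plan is to exhibit a continuous loop $\theta \mapsto \zeta_\theta$, $\theta \in \sphere^1$, of bisectors each meeting $\gamma$ in exactly two points, arranged so that, as $\theta$ runs once around $\sphere^1$, the two points of intersection of $\cf(\zeta_\theta,T)$ with $\cf(\gamma,T)$ sweep out all of $\cf(\gamma,T)$; then $z$ is one of them for some $\theta_0$, and $\zeta = \zeta_{\theta_0}$ has the required properties. Throughout, write $\mathcal{C}(X)$ for the space of two-element subsets of a topological space $X$; when $X$ is homeomorphic to $\sphere^1$, $\mathcal{C}(X)$ is an open M\"obius band, and I will use the elementary fact that a homotopically essential loop in $\mathcal{C}(\sphere^1)$ cannot omit any point $w$ of $\sphere^1$ --- otherwise it would lie in $\mathcal{C}(\sphere^1 \setminus \{w\})$, the space of two-element subsets of an arc, which is convex and hence contractible.

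First I would build the loop of bisectors. By the Schoenflies theorem $\gamma$ bounds two disks $R_1, R_2 \subset \sphere^2$, and since $\gamma$ is a Jordan curve the Riemann maps $\overline{\mathbb{D}} \to \overline{R_i}$ extend, by Carath\'eodory's theorem, to homeomorphisms of the closures; in particular they are smooth on the interiors and carry $\partial\mathbb{D}$ homeomorphically onto $\gamma$. Fix a homeomorphism $\sphere^1 \to \gamma$, $\theta \mapsto \gamma(\theta)$ (viewing $\sphere^1 = \R/\mathbb{Z}$), and for each $\theta$ put $p_\theta = \gamma(\theta)$ and $q_\theta = \gamma(\theta + \tfrac12)$. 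Take a smooth arc $\beta_\theta \subset \overline{R_2}$ from $p_\theta$ to $q_\theta$ with interior in $R_2^\circ$ (the Riemann image of a chord of $\mathbb{D}$) and a smooth arc $\alpha_\theta \subset \overline{R_1}$ from $p_\theta$ to $q_\theta$ with interior in $R_1^\circ$; as $\alpha_\theta$ slides across $R_1$ through such arcs, the area of the side of $\zeta_\theta := \alpha_\theta \cup \beta_\theta$ containing a fixed one of the two subarcs of $\gamma$ varies continuously over an interval containing $2\pi$, so by the intermediate value theorem some choice makes $\zeta_\theta \in \bis$. All of these selections can be made continuously in $\theta$. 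Each $\zeta_\theta$ is then a bisector, smooth away from $\gamma$, meeting $\gamma$ in exactly the two points $\{p_\theta, q_\theta\}$, and the loop $\theta \mapsto \{p_\theta, q_\theta\} \in \mathcal{C}(\gamma)$ is essential: under the identification $\gamma \cong \sphere^1$ it is $\theta \mapsto \{\theta, \theta + \tfrac12\}$, whose coordinate sum $2\theta + \tfrac12$ winds twice around $\sphere^1$.

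Next I would flow the loop forward. Fix $\theta$. Since $\gamma$ and $\zeta_\theta$ are bisectors meeting in exactly two points, the argument in the proof of Lemma~\ref{lemma-crossingcontinuous} shows that $\cf(\gamma,t)$ and $\cf(\zeta_\theta,t)$ meet in exactly two points for every $t \in [0,T]$, and Lemmas~\ref{lemma-crossingpaths} and~\ref{lemma-crossingcontinuous} present these two points as continuous functions $p_\theta(t), q_\theta(t)$ of $(\theta,t) \in \sphere^1 \times [0,T]$ with $\{p_\theta(0), q_\theta(0)\} = \{p_\theta, q_\theta\}$. The path $t \mapsto \cf(\gamma,t)$ is continuous in Fr\'echet distance on $[0,T]$ --- smooth with bounded geodesic curvature on $(0,T]$, as in the proof of Lemma~\ref{lemma-flowconvergesinhausdorff}, and converging to $\gamma$ in Fr\'echet distance as $t\to 0$ by definition of a solution to the curvature flow problem --- so it admits a continuous family of parametrizations $\Gamma(\cdot,t) : \sphere^1 \to \cf(\gamma,t)$. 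Transporting the pairs through these parametrizations, $(\theta,t) \mapsto \{\Gamma(\cdot,t)^{-1}(p_\theta(t)),\ \Gamma(\cdot,t)^{-1}(q_\theta(t))\}$ is a homotopy in $\mathcal{C}(\sphere^1)$ from the essential loop at $t=0$ to the loop $\theta \mapsto \{\Gamma(\cdot,T)^{-1}(p_\theta(T)),\ \Gamma(\cdot,T)^{-1}(q_\theta(T))\}$ at $t=T$, which is therefore also essential. Hence the pairs $\{p_\theta(T), q_\theta(T)\}$ cover all of $\cf(\gamma,T)$, so $z \in \{p_{\theta_0}(T), q_{\theta_0}(T)\}$ for some $\theta_0$, giving $z \in \cf(\zeta_{\theta_0},T)$; take $\zeta = \zeta_{\theta_0}$.

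I expect the main obstacle to be the construction in the second step: producing the loop $\{\zeta_\theta\}$ so that it is simultaneously continuous in Fr\'echet distance, consists of bisectors meeting $\gamma$ in exactly two points and smooth off $\gamma$, and realizes an essential loop of intersection pairs. The delicate parts are the area balancing (carried out continuously in $\theta$) and the continuous selection of the arcs $\alpha_\theta, \beta_\theta$ through the possibly wild curve $\gamma$, for which the Carath\'eodory extension of the Riemann maps is the natural tool; one must also verify that $t\mapsto\cf(\gamma,t)$ genuinely lifts to a continuous family of parametrizations on $[0,T]$. Once the loop of bisectors is in hand, the forward-flow step is essentially formal, being carried by Lemmas~\ref{lemma-crossingpaths} and~\ref{lemma-crossingcontinuous} together with the observation that an essential loop in the configuration space of a circle cannot omit a point.
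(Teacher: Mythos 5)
Your proposal follows the same overall strategy as the paper's proof: build a continuous one-parameter family of bisectors each meeting $\gamma$ in exactly two points using the Carath\'eodory extensions of the Riemann maps, track the intersection points forward in time via Lemmas~\ref{lemma-crossingpaths} and~\ref{lemma-crossingcontinuous}, and finish with a degree/surjectivity argument. The implementations differ in two places, both minor. First, the family of bisectors: the paper fixes the two arcs to be the Riemann images of straight chords, lets one endpoint $x$ range freely over $\gamma$, and determines the second endpoint $\upsilon_0(x)$ uniquely by the bisection condition $a(x,\upsilon_0(x))=2\pi$; you instead fix a pairing $p_\theta,q_\theta=\gamma(\theta),\gamma(\theta+\tfrac12)$ and slide the arc $\alpha_\theta$ through $R_1$ to bisect. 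Both work, but note that the assertion ``all of these selections can be made continuously in $\theta$'' hides the same issue the paper confronts explicitly: for the IVT selection to vary continuously you need the area to be \emph{strictly} monotone in the sliding parameter (so the solution is unique), which then gives continuity by a compactness argument; the paper does exactly this for $a(x,y)$ (showing $A(x,y)\setminus A(x,y')$ has positive measure, then deducing continuity of $\upsilon_0$). You should say why sliding $\alpha_\theta$ moves the area strictly monotonically, e.g.\ take $\alpha_\theta$ to be $\phi_+$-images of a nested one-parameter family of circular arcs in $\mathbb{D}$. Second, the surjectivity argument: the paper simply observes that $p(\cdot,T):\gamma\to\cf(\gamma,T)$ is homotopic through the family $p(\cdot,t)$ to the identity at $t=0$, hence has degree one and is surjective; your formulation via an essential loop in the unordered-pair configuration space $\mathcal{C}(\sphere^1)$ (a M\"obius band) is a slightly heavier but equivalent packaging of the same degree computation, and buys nothing extra here since $p(\cdot,T)$ already tracks a single point per parameter. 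Your identification of the two delicate points --- continuity of the $\zeta_\theta$ in Fr\'echet distance, and the existence of a continuous family of parametrizations of $\cf(\gamma,t)$ for $t\in[0,T]$ --- is apt; the paper handles the first via Heine--Cantor uniform continuity of $\phi_\pm$ and treats the second rather tersely.
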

}

\begin{proof}
Let $\mathbf{D}$ denote the unit disk, and let $\mu$ denote surface area on the sphere. 

Fix 
a pair of internally conformal homeomorphisms $\phi_+,\phi_-: \mathbf{D} \to \sphere^2$ that map the closed unit disk to the closed region on either side of $\gamma$ as in Carathéodory's mapping theorem \cite{pommerenke1992boundary},
and direct $\gamma$ so $\phi_+$ is on the left.

For $x,y \in \gamma$, let
\[ \alpha(x,y) = \phi_+\left(\left[\phi_+^{-1}(x),\phi_+^{-1}(y)\right]_\mb{C}\right) \cup \phi_-\left(\left[\phi_-^{-1}(x),\phi_-^{-1}(y)\right]_\mb{C}\right) \]
be directed so that $\alpha(x,y)$ crosses $\gamma$ leftward at $x$.  
Let $A(x,y)$ be the region to the left of $\alpha(x,y)$,  
and let $a(x,y) = \mu(A(x,y))$ be the area of $A(x,y)$. 

We claim that $a(x,y)$ is continuous.
Consider $x_k, y_k  \in \gamma$ such that $x_k\to x_\infty$ and $y_k \to y_\infty$, and let $a_k$ be the area of $A_k=A(x_k,y_k)$.
For $n \geq K$ we have 
\[ \mu \left(\bigcap_{k=K}^\infty A_k \right) \leq \mu \left( A_n \right) = a_n \leq \mu\left(\bigcup_{k=K}^\infty A_k\right), \text{ so}  \]
\[ 
\mu \left(\bigcap_{k=K}^\infty A_k \right) \leq \inf_{k\geq K} a_k,   
\quad 
\mu\left(\bigcup_{k=K}^\infty A_k\right) \geq \sup_{k\geq K} a_k.  
\]
\[ A_\infty  \subseteq \bigcup_{K=1}^\infty \bigcap_{k=K}^\infty A_k \subseteq \bigcap_{K=1}^\infty \bigcup_{k=K}^\infty A_k \subseteq \overline{A_\infty}. \]
and since the boundary of $A_\infty$ has area 0, $\mu \left( \overline{A_\infty} \right) = \mu \left( A_\infty \right) = a_\infty$.
By continuity from above and below, 
\begin{gather*} 
a_\infty = \mu\left(\bigcap_{K=1}^\infty \bigcup_{k=K}^\infty A_k \right)
= \lim_{K\to\infty} \mu \left( \bigcup_{k=K}^\infty A_k \right) \geq \limsup_{k \to \infty} a_k, 
\\
a_\infty = \mu \left( \bigcup_{K=1}^\infty \bigcap_{k=K}^\infty A_k \right)  
= \lim_{K \to \infty} \mu \left( \bigcap_{k=K}^\infty A_k \right) \leq \liminf_{k\to\infty} a_k, 
\end{gather*}
so $a_k \to a_\infty$, which means the claim holds.

For $y'$ to the left of $\alpha(x,y)$ directed as above, $A(x,y) \supsetneq A(x,y')$ and $A(x,y) \setminus A(x,y')$ has positive measure, so $a(x,y) \geq a(x,y')$, which means $a(x)$ is strictly increasing along $\gamma$ from 0 to $4\pi$.  Thus, for each $x \in \gamma$ there is a unique $\upsilon_0(x) \in \gamma$ such that 
$a(x,\upsilon_0(x)) = 2\pi$.

We claim $\upsilon_0$ is continuous.  
Suppose not. Then by compactness of $\gamma$, there is a sequence $\upsilon_0(x_k) \to y \neq \upsilon_0(x_\infty)$.  Also, $a(x_n,\upsilon_0(x_k)) \to a(x_\infty,y)$, and $a(x_n,\upsilon_0(x_k)) = 2\pi$, so $a(x_\infty,y) = 2\pi$, but that contradicts the uniqueness of $\upsilon_0(x_\infty)$.
Thus, the claim holds.

Let $\eta(x) = \alpha(x,\upsilon_0(x))$, 
and let $p(x;t) \in \cf(\gamma,t)\cap\cf(\eta(x),t)$ be the trajectory starting from $p(x;0) = x$ as in the Lemma \ref{lemma-crossingcontinuous}. 
Since $\phi_+$ is a homeomorphism, we have $\phi_+^{-1}(x_k) \to \phi_+^{-1}(x_\infty)$ and $\phi_+^{-1}(\upsilon_0(x_k)) \to \phi_+^{-1}(\upsilon_0(x_\infty))$, so $[\phi_+^{-1}(x_k),\phi_+^{-1}(\upsilon_0(x_k))]_\mathbf{D} \to [\phi_+^{-1}(x_\infty),\phi_+^{-1}(\upsilon_0(x_\infty))]_\mathbf{D}$ in Fréchet distance, and likewise for $\phi_-$. 
By the Heine-Cantor theorem, $\phi_+$ and $\phi_-$ are uniformly continuous, so 
$\eta(x_k) \to \eta(x_\infty)$ in Fréchet distance. 
Therefore, by Lemma \ref{lemma-crossingcontinuous}, $p(x_k;t_k) \to p(x_\infty;t_\infty)$. 
Hence, $p$ is continuous. 

The map $p$ defines a homotopy from the identity map on $\gamma$ to the map $p(\cdot,T) : \gamma \to \cf(\gamma,T)$, 
so $p(\cdot,T)$ winds once around $\cf(\gamma,T)$, which implies there is some $x \in \gamma$ such that $\chi(x,T) = z$.
Thus, $\zeta = \eta(x)$ has the desired properties.
\end{proof}

To prove Theorem \ref{theoremFlowConvergesInFrechet} in the finite time case, we will use another theorem of Lauer's, which for a sequence of curves $\gamma_k$ converging to $\gamma_\infty$ in Fréchet distance and $t_k \to t_\infty > 0$, 
gives an upper bound on the length of $\cf(\gamma_k,t_k)$ independent of $k$ for all $k$ sufficiently large \cite[Theorem 1.3]{lauer2016evolution}.
This upper bound is in terms of $r$-multiplicity, a quantity used in the proof of the infinite time case of the Theorem \ref{theoremFlowConvergesInFrechet}.

\begin{proof}[Proof of Theorem \ref{theoremFlowConvergesInFrechet} for $t_\infty < \infty$.]
Let $\gamma_k$ satisfy the hypothesis of the theorem, and assume for the sake of contradiction that $\cf(\gamma_k,t_k) \not\to \cf(\gamma_\infty,t_\infty)$ in Fréchet distance.
By a theorem of Lauer, $\cf(\gamma_k,t_k)$ is smooth and has length bounded by some constant for all $k$ sufficiently large \cite[Theorem 1.3]{lauer2016evolution}, provided that $t_\infty > 0$.  Therefore, the sequence of constant speed parameterizations $\psi_k : \sphere^1 \to \cf(\gamma_k,t_k)$ is uniformly equicontinuous, so by the Arzelà-Ascoli theorem, we can restrict to a sequence that converges uniformly to a map $\psi_\infty$.
Moreover, by Lemma \ref{lemma-flowconvergesinhausdorff}, the range of the limit $\psi_\infty$ is $ \cf(\gamma_\infty,t_\infty)$. 
We can also let $\omega: \sphere^1 \to \cf(\gamma_\infty,t_\infty)$ be a constant speed parameterization.
Let $\phi :[0,2\pi]_\mb{R} \to \mb{R}$ by 
\[\phi(\theta) = -\mathfrak{i}\log(\omega^{-1}\circ \psi_\infty (e^{\mathfrak{i}\theta})) ,\] 
which is just $\omega^{-1}\circ \psi_\infty$ lifted by the standard parameterization of the circle by angle.

Since the map $(\theta \mapsto \omega^{-1} \circ \psi_\infty (e^{\mathfrak{i}\theta}))$ is periodic, $\phi(2\pi)-\phi(0)$ is a multiple of $2\pi$.  
We may choose the direction of $\omega$ so that $\phi(2\pi)-\phi(0) \geq 0$.
If we had $\phi(2\pi)-\phi(0) = 0$, then the region on one side of $\cf(\gamma_k,t_k)$ would converge to a subset of $\cf(\gamma_\infty,t_\infty)$, but $\gamma_k$ continues to be a bisector as it evolves, so that cannot happen. 
Hence, $\phi(2\pi)-\phi(0) \neq 0$.
If we had $\phi(2\pi)-\phi(0) > 2\pi$, then $\cf(\gamma_k,t_k)$ would wind more than once around a tubular neighborhood of $\cf(\gamma_\infty,t_\infty)$, which is impossible for a simple closed curve, so $\phi(2\pi)-\phi(0) = 2\pi$.

If $\phi$ were weakly increasing, then $\phi$ would be the limit of some sequence of strictly increasing functions $\phi_k \to \phi$, and we would have homeomorphisms 
$\tau_k : \sphere^1 \to \cf(\gamma_\infty,t_\infty)$ given by  
$\tau_k(x) = \omega(e^{\mathfrak{i}\phi_k(-\mathfrak{i}\log(x))}))$ that converge to $\psi_\infty$, but then the Fréchet distance between $\cf(\gamma_k,t_k)$ and $\cf(\gamma_\infty,t_\infty)$ would be bounded by $\sup_{x \in \sphere^1}\|\psi_k(x)-\tau_k(x)\| \to 0$, 
which contradicts our assumption that $\cf(\gamma_k,t_k) \not\to \cf(\gamma_\infty,t_\infty)$ in Fréchet distance.
Hence, $\phi$ must decrease somewhere, i.e.\ there is $w < y$ such that $\phi(y) < \phi(w)$, and we may choose the period of $\phi$ so that $\phi(w)$ is between $\phi(0)$ and $\phi(2\pi)=\phi(0) +2\pi$, 
and may choose $w,y$ arbitrarily close together.  Then, there are $v<w<x<y<z$ where $\phi(v) = \phi(y) < \phi(x) < \phi(w) = \phi(z)$.

Let $ v_k = \psi_k(e^{\mathfrak{i}v})$, $ w_k = \psi_k(e^{\mathfrak{i}w})$, $ x_k = \psi_k(e^{\mathfrak{i}x})$, $ y_k = \psi_k(e^{\mathfrak{i}y})$, $ z_k = \psi_k(e^{\mathfrak{i}z})$.
The situation so far is that as we traverse $\cf(\gamma_k,t_k)$, we pass through $ v_k, w_k, x_k, y_k, z_k$ in that order, and these points respectively converge to $ y_\infty, w_\infty, x_\infty, y_\infty, w_\infty$; see Figure \ref{figure-Frechet-converge}.
By Lemma \ref{lemma-crossingcurve}, there is a bisector $\zeta$ that intersects $\gamma$ at exactly 2 points and such that $\cf(\zeta,t_\infty)$ intersects $\cf(\gamma_\infty,t_\infty)$ at $ x_\infty$. 
Also, we can choose $w,y$ sufficiently close together so that the 2 points $\cf(\zeta,t_\infty) \cap \cf(\gamma_\infty,t_\infty)$ are not on the same arc from $ w_\infty$ to $ y_\infty$.
That is, $ w_\infty$ and $ y_\infty$ are on opposite sides of $\cf(\zeta,t_\infty)$.

Since level-set flow gives a solution to the curvature flow problem, we can find a nested sequence of smooth annuli $A_{1,0} \supset A_{2,0} \supset \cdots$ such that $\cf(\zeta,t) = \bigcap_{i=1}^\infty A_i(t)$ where the boundary of $A_i(t)$ evolves by curvature flow starting $A_i(0) = A_{i,0}$   \cite{lauer2016evolution}. 
Choose one of the annuli $A = A_i$ such that $A(t)$ remains and annulus up to time $t=2t_\infty$ 
and close enough to $\zeta$ that $A(t_\infty)$ does not contain $ w_\infty$ or $ y_\infty$; see Figure \ref{figure-Frechet-converge}.
Let $\alpha$ and $\beta$ be the curves on the boundary of $A(0)$.
Note that 
$\cf(\alpha,t)$ and $\cf(\beta,t)$ are on either side of $\cf(\zeta,t)$ for all $t \in [0,2t_\infty]_\mb{R}$.
Hence, each arc of $\gamma_\infty$ subdivided by $\alpha\cup\beta$ that goes from a point on $\alpha$ to a point on $\beta$ must cross $\zeta$, and since $\gamma_\infty$ meets $\zeta$ at only 2 points, there can be at most 2 such arcs.

Since $\alpha,\zeta,\beta$ are compact and pairwise disjoint, we may choose $\eps_1> 0 $ small enough that $\alpha\oplus\eps_1,\zeta\oplus\eps_1,\beta\oplus\eps_1$ are pairwise disjoint. 
Let us choose $k$ sufficiently large that $\gamma_k$ is at most Fréchet distance $\eps_1$ from $\gamma_\infty$, and $ v_k$, $ w_k$, $ y_k$, and $ z_k$ are each outside of $A(t_k)$.
To see that the later condition can be satisfied, recall that these points each approach either $ w_\infty$ or $ y_\infty$, which are bounded away from $A(t_\infty)$, and $A(t_k) \to A(t_\infty)$ in Hausdorff distance. 

Consider an arc $\xi$ of $\gamma_k \cap A(0)$ from $\alpha$ to $\beta$.
Since $\gamma_k$ is withing Fréchet distance $\eps_1$ from there is some map $\psi : \xi \to \gamma_\infty$ such that $\|\psi(x) -x\| \leq \eps_1$ for all $x \in \xi$, so $\psi(\xi)$ must be an arc of $\gamma_\infty$ from $\alpha\oplus\eps_1$ to $\beta\oplus\eps_1$.
Hence, $\psi(\xi)$ must intersect $\zeta$, and since $\gamma_\infty$ only intersects $\zeta$ at 2 points, 
there are only 2 arcs of $\gamma_k \cap A(0)$ from one boundary of $A(0)$ to the other. 
Therefore, we can find a curve $\eta$ of area 0 that winds once around the interior of $A(0)$ and intersects $\gamma_k$ at only 2 points; see Figure \ref{figure-Frechet-converge}.

\begin{figure}
\includegraphics{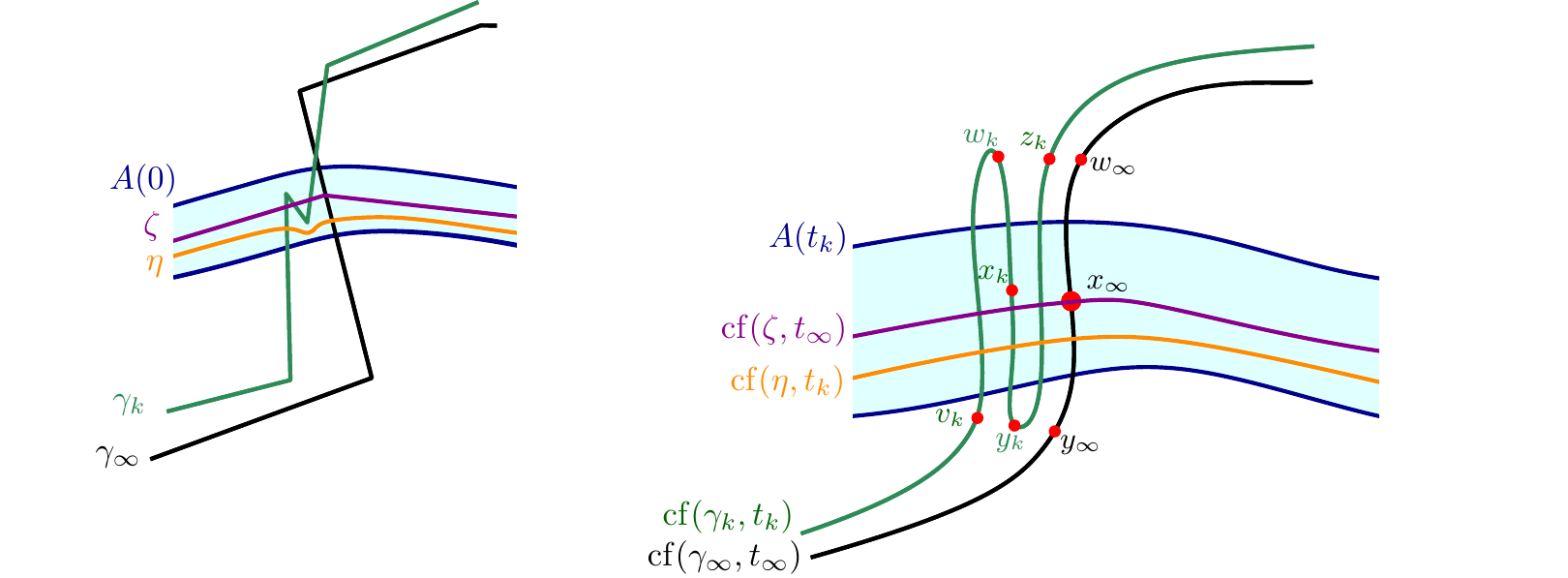}
\caption{
Assuming $\cf(\gamma_k,t_k) \to \cf(\gamma_\infty,t_\infty)$ in Hausdorff distance but not in Fréchet distance, 
$\gamma_k$ evolves to pass close by a point $x_\infty$ three times, and the curve $\zeta$ only intersects $\gamma_\infty$ twice and evolves to pass through $x_\infty$.
The curve $\eta$ crosses $\gamma_k$ only twice, but evolves to cross $\cf(\gamma_k,t_k)$ more than twice, contradicting Angenet's theorem.
}
\label{figure-Frechet-converge}
\end{figure}

By Angenet's theorem, $\cf(\eta,t)$ remains disjoint from $\cf(\alpha,t)$ and $\cf(\beta,t)$, and crosses $\cf(\gamma_k,t)$ at most twice, for as long as a solution to curvature flow exists \cite{angenent1991parabolic}, 
and by Lauer's theorem, the evolving curves $\cf(\alpha,t),\cf(\beta,t)$ on either side of $\eta$ ensures that a unique solution exists for $\cf(\eta,t)$ up to time at least  $2t_\infty$ \cite{lauer2016evolution}. 

Since $\cf(\gamma_k,t_k)$ passes though $ v_k, w_k, y_k, z_k$ in that order, and $ v_k$ and $ y_k$ are on one side of $A(t_k)$, and $ w_k$ and $ z_k$ are on the other side of $A(t_k)$, $\cf(\gamma_k,t_k)$ must cross from one side of $A(t_k)$ to the other at least 4 times.
Since $\cf(\eta,t)$ never intersects the boundary of $A(t)$, $\cf(\eta,t_k)$ winds once around $A(t_k)$.
Therefore, $\cf(\eta,t_k)$ intersects $\cf(\gamma_k,t_k)$ at least at 4 points, but $\eta$ intersects $\gamma_k$ at only 2 points, which contradicts Angenet's theorem that the number of intersection points does not increase; see Figure \ref{figure-Frechet-converge}.  Thus, our assumption that $\gamma_k \not\to \gamma_\infty$ in Fréchet distance cannot hold
\end{proof}

To deal with the case where $t_\infty = \infty$, we will use $r$-multiplicity. 
For $0 < r < \nicefrac\pi2$, 
the $r$-multiplicity $M_{r,g}$ of a Jordan curve $\gamma$ at a great circle is the number of connected components of $\gamma\cap (g\oplus 2r)^\circ$ that intersect $g\oplus r$ in the angular metric on the sphere.
Lauer showed for all Jordan curves $\gamma$ that $M_{r,g}(\cf(\gamma,t))$ is non-increasing \cite[Lemma 7.3]{lauer2016evolution}, and for all sequences $\gamma_k \to \gamma_\infty$ in Fréchet distance that 
\[ \limsup_{k \to \infty} M_{r,g}(\gamma_k) \leq M_{r,g}(\gamma_\infty) \]
\cite[Lemma 7.4]{lauer2016evolution}.

\begin{proof}[Proof of Theorem \ref{theoremFlowConvergesInFrechet} for $t_\infty = \infty$.]
Here we use a similar argument to the case where $t_\infty$ was finite, except the role of the annuli $A_i(t)$ in those arguments will be replaced by $r$-multiplicity.
Consider sequences $\gamma_k \to \gamma_\infty$ in Fréchet distance and $t_k \to \infty$. 

Let us first consider the case where $\gamma_\infty$ is a great circle.
Then, $\cf(\gamma_\infty,t) = \gamma_\infty$ is an unchanging great circle for all time. 
Let us refer to $\gamma_\infty$ as the ``equator'' and refer to the semicircles that cross perpendicularly at their midpoint as ``meridians.'' 

Consider $\eps > 0$.  
Since $\gamma_k \to \gamma_\infty$, for all $k$ sufficiently large, 
$\gamma_k \subset \gamma_\infty \oplus \eps$, so 
the $\eps$-multiplicity of $\gamma_k$ at $\gamma_\infty$ is 1, i.e.\ $M_{\eps,\gamma_\infty}(\gamma_k) = 1$.  
Observe that $M_{\eps,\gamma_\infty}(\cf(\gamma_k,t))$ can never vanish, since $\gamma_\infty$ and $\cf(\gamma_k,t)$ are bisectors, and therefore cannot be disjoint.  Since the $\eps$-multiplicity of a curve evolving by level-set flow is non-increasing \cite{lauer2016evolution}, we must have $M_{\eps,\gamma_\infty}(\cf(\gamma_k,t)) = 1$, so $\cf(\gamma_k,t) \subset \gamma_\infty\oplus2\eps$.
Furthermore, for $k$ sufficiently large $\gamma_k$ winds once around the annulus $\gamma_\infty\oplus2\eps$, and since $\cf(\gamma_k)$ gives a homotopy from $\gamma_k$ to $\cf(\gamma_k,t_k)$, the curve $\cf(\gamma_k,t_k)$ must also wind once around $\gamma_\infty\oplus2\eps$. 
By Alexander duality,  
every curve through the annulus $\gamma_\infty\oplus2\eps$ going from one boundary circle of the annulus to the other must intersect the curve $\cf(\gamma_k,t_k)$. 
In particular, $\cf(\gamma_k,t_k)$ must intersect the arc of each meridian though the annulus, 
which implies that $\gamma_\infty \subset \gamma_k \oplus2\eps$.  Hence, $\gamma_k$ and $\gamma_\infty$ are within Hausdorff distance $2\eps$. 
Letting $\eps \to 0$, we get that $\cf(\gamma_k,t_k) \to \gamma_\infty$ in Hausdorff distance. 

Suppose for the sake of contradiction that $\cf(\gamma_k,t_k)$ does not converge to $\gamma_\infty$ in Fréchet distance. 
Then, like in the case where $t_\infty$ was finite, 
as we traverse $\cf(\gamma_k,t_k)$, we would pass through points $ v_k, w_k, x_k, y_k, z_k$ that respectively converge to points $ y_\infty, w_\infty, x_\infty, y_\infty, w_\infty$ on $\gamma_\infty$.
Let $3r$ be the minimum distance between the points $w_\infty, x_\infty, y_\infty$. 
Let $\mu$ be the meridian that passes through $x_\infty$ on the equator and $\zeta$ be the great circle containing $\mu$.
Since $\gamma_k \to \gamma_\infty$ in Fréchet distance, $\gamma_k$ is within distance $r$ for all $k$ sufficiently large, which implies that the $r$-multiplicity of $\gamma_k$ at $\zeta$ is 2, i.e.\ $M_{r,\zeta}(\gamma_k) = 2$. 
Since $r$-multiplicity is non-increasing, and $\cf(\gamma_k,t_k)$ must intersect each meridian in the annulus $\gamma_\infty\oplus2\eps$, we must have $M_{r,\zeta}(\cf(\gamma_k,t_k)) = 2$. 
However, for all $k$ sufficiently large each of the points $v_k,w_k,y_k,z_k$ on $\cf(\gamma_k,t_k)$ would be within distance $r$ of the respective limit point $ y_\infty, w_\infty, y_\infty, w_\infty$, which are on opposite sides of the annulus $\zeta\oplus2r$. 
Therefore, $\cf(\gamma_k,t_k)$ would have to cross through the annulus $\zeta\oplus2r$ at least 4 times, which would imply that $M_{r,\zeta}(\cf(\gamma_k,t_k)) \geq 4$, which is impossible.
Thus, $\cf(\gamma_k,t_k) \to \gamma_\infty$ in Fréchet distance, provided that $\gamma_\infty$ is a great circle.

We now prove the general case for $\gamma_\infty \in \bis$, not restricted to being a great circle.
Suppose for the sake of contradiction that $\cf(\gamma_k,t_k) \not\to \cf(\gamma_\infty,\infty)$ in Fréchet distance.  
Then, we could assume that $\cf(\gamma_k,t_k)$ is bounded away from $\cf(\gamma_\infty,\infty)$; otherwise choose an appropriate subsequence. 
Since $\cf(\gamma_\infty,t)$ converges to the great circle $\cf(\gamma_\infty,\infty)$ as $t \to \infty$, we may choose $s_n$ sufficiently large that $\cf(\gamma_\infty,s_n)$ is at most Fréchet distance $\frac1{2n}$ from $\cf(\gamma_\infty,\infty)$.  Since the finite time case of the Theorem \ref{theoremFlowConvergesInFrechet} holds, $\cf(\gamma_k,s_n) \to \cf(\gamma_\infty,s_n)$ as $k \to \infty$ for each $n$ fixed, so we can choose $k_n$ sufficiently large that $\cf(\gamma_{k_n},s_n)$ is at most Fréchet distance $\frac1{2n}$ from $\cf(\gamma_\infty,s_n)$ and $t_{k_n}-s_n > n$.

Let $\widetilde \gamma_n = \cf(\gamma_{k_n},s_n)$ and let $\widetilde t_n = t_{k_n}-s_n$.
By construction, $\widetilde \gamma_n$ is at most Fréchet distance $\nicefrac1n$ from $\cf(\gamma_\infty,\infty)$, so $\widetilde \gamma_n \to \cf(\gamma_\infty,\infty)$ as $n \to \infty$.
We just saw that Theorem \ref{theoremFlowConvergesInFrechet} holds for curves converging to a great circle even as time goes to infinity, so we have $\cf(\gamma_{k_n}, t_{k_n}) = \cf(\widetilde \gamma_n, \widetilde t_n) \to \cf(\gamma_\infty,\infty)$, but that contradicts that $\cf(\gamma_k,t_k)$ is bounded away from $\cf(\gamma_\infty,\infty)$.
Thus, $\cf(\gamma_k,t_k) \to \cf(\gamma_\infty,\infty)$ in Fréchet distance.
\end{proof}

\bibliographystyle{plain}
\bibliography{bifl-1}

\end{document}